\documentclass[copyright,creativecommons]{eptcs}

\usepackage{graphicx} 
\usepackage{amsmath}
\usepackage{amsthm}
\usepackage{quiver}

\usepackage{xcolor}
\usepackage{placeins}
\definecolor{darkMagenta}{HTML}{DE3163}

\usepackage[all,cmtip]{xy}
\usepackage{amssymb,amsmath,wasysym,url,color}
\usepackage{enumerate,xspace}
\usepackage{graphicx}
\usepackage{proof}
\usepackage{dsfont}
\usepackage{MnSymbol} 
\usepackage{stmaryrd} 

\usepackage{xcolor}

\input diagxy

\usepackage{tikz}

\tikzstyle{strings}=[baseline={([yshift=-.5ex]current bounding box.center)}]

\newcommand{\X}{\ensuremath{\mathbb X}\xspace}
\newcommand{\Y}{\ensuremath{\mathbb Y}\xspace}

\newtheorem{observation}{Remark}[section]
\newtheorem{lemma}[observation]{Lemma}  
\newtheorem{theorem}[observation]{Theorem}
\newtheorem{definition}[observation]{Definition}
\newtheorem{example}[observation]{Example}
\newtheorem{remark}[observation]{Remark}

\newtheorem{proposition}[observation]{Proposition}

\begin{document}

\title{Unitary, Inner Product, and Dagger Categories}


\def\titlerunning{Unitary, inner product, and dagger categories}

\author{
    Robin Cockett
    \institute{
        University of Calgary
            \\
        Calgary, Canada
    }
    \email{robin@ucalgary.ca}
        \and
    Durgesh Kumar
    \institute{
        University of Calgary
            \\
        Calgary, Canada
    }
    \email{durgesh.kumar@ucalgary.ca}
        \and
    Priyaa Varshinee Srinivasan
    \institute{
        Tallinn University of Technology
            \\
        Tallinn, Estonia
    }
    \email{priyaavarshinee@gmail.com}
	\thanks{This work was co-funded by the European
	Union and Estonian Research Council through the Mobilitas 3.0 (MOB3JD1227).}
}

\def\authorrunning{R.\ Cockett, D.\ Kumar, \& P V.\ Srinivasan}

\maketitle

\begin{abstract}

This article provides an alternate characterization of dagger categories, which are central to the study of categorical quantum mechanics, in terms of inner product categories. An inner product category is an ``achiral involutive'' category with an inner product combinator. Inner product categories are, in turn, precisely the same as unitary categories, which are a weaker form of dagger categories. In unitary categories, there is an isomorphism between an object and its dagger, instead of the identity function as in the case of dagger categories. Every unitary category is equipped with a global inner product structure, which allows one to strictify the involutive structure on the unitary category to obtain a dagger category, making unitary categories 2-categorically equivalent to dagger categories.

By regarding the inner product as an abstract metric on an (achiral) involutive category, one can define metric-preserving maps (isometries) in inner product categories, and also develop other notions of special maps --- unitary, Hermitian, positive, and normal maps --- in this setting.
\end{abstract}

\section{Introduction}

A $\dagger$-category (pronounced ``dagger category'') \cite{Selinger} is a category equipped with a contravariant involution: it is a foundational construct in the study of categorical quantum mechanics \cite{Bob-book, Heunen-book}. $\dagger$-categories are defined to have a dagger functor which is strict, in the sense of being the identity on objects.  However, this level of strictness is quite restrictive when relating quantum structures to broader categorical frameworks. For example, $\ast$-autonomous categories \cite{Barr79, Cockett97} and compact closed categories can have necessarily non-strict involutions \cite[Sections 3.1 and 3.2]{Priyaa}. 
In this paper, in order to capture these examples, we follow \cite{Mellies} and reformulate $\dagger$-categories as examples of achiral involutive categories. 

The term {\em involutive} has been used in various ways in the categorical literature. For example, Donald Yau, in his book ``Involutive Category Theory'' \cite{Tau}, studies covariant involutions.  In this regard, he follows a precedent set by Bart Jacobs \cite{Jacobs}. In this paper, however, when we talk about involutive categories, we are considering exclusively contravariant involutions --- also called ``reverse'' involutions by Yau. 

A general theory of contravariant involutions was initiated by Paul-Andr\'e  Mellie\`s in his investigation of ``chirality'' \cite{Mellies}.  The term ``chiral'' comes from chemistry\footnote{In chemistry, a chiral molecule is a molecule with two distinct enantiomers. Even though such enantiomers are mirror images of each other, they have distinct chemical and physical properties (e.g. famously, enantiomers rotate polarized light in opposite directions).} in the context of breaking the behavioural symmetry between the mirror images of a molecule -- here -- between a category and its dual.  In a sequel paper, we shall use the more general {\em chiral\/} setting advocated by Mellie\`s to talk about chiral involutions.  In this paper, however, we consider the {\em achiral} system of 0-cells, 1-cells, and 2-cells which do not break symmetry. In light of this, we will refer to these categories as achiral involutive categories.

Every achiral involutive category has an associated unitary category. A {\em unitary} category is a weak version of a $\dagger$-category \cite{Selinger} in which every object has an isomorphism to its $\dagger$-dual. This associated unitary category is constructed from a general achiral involutive category by collecting its pre-unitary objects. The construction is 2-functorial, and the (2-)category of unitary categories sits as a coreflective subcategory in the 2-category of achiral categories. 

Next, we axiomatize what it means for an achiral involutive category to be equipped with a (global) {\em inner product} structure. Any unitary category can be equivalently viewed as an achiral involutive category with an inner product. This inner product structure can be used to strictify the involutive structure to obtain a $\dagger$-category (in the sense of \cite{Selinger}), providing a three-way equivalence between the 2-categories of unitary categories, inner product categories, and $\dagger$-categories. Moreover, by regarding the inner product as providing an abstract metric on the category, one can define isometries to be metric-preserving maps. Unitary maps are then isometries which are invertible.  Similarly, Hermitian maps, positive maps, and normal maps can all be naturally described in terms of the inner product structure. 

Chiral involutive categories are not unitary, hence they do not have an inner product. However, this setting naturally leads to $\dagger$-linearly distributive categories \cite{Priyaa} where the par is given by the chiral dual. Notably, models of $\dagger$-linearly distributive categories ($\dagger$-LDCs) are used to study infinite-dimensional quantum processes. We plan to explain this in detail in a sequel.

\section{Achiral involutions}

A category has an {\bf (achiral) involution} if it has a contravariant endofunctor $(\_)^\dagger: \mathbb{X}^{\rm op} \to \mathbb{X}$ called the {\bf dagger} and a natural isomorphism 
$\iota: X \to X^{\dagger\dagger}$ called the {\bf involutor} such that the following diagram commutes:- 
\[\begin{tikzcd}
	{X^\dagger} \\
	{X^{\dagger \dagger \dagger}} && {X^\dagger}
	\arrow["{\iota_{X^\dagger}}"', from=1-1, to=2-1]
	\arrow[shift left, no head, from=1-1, to=2-3]
	\arrow[no head, from=1-1, to=2-3]
	\arrow["{(\iota_X)^\dagger}"', from=2-1, to=2-3]
\end{tikzcd}\]
We call a category with an (achiral) involution an {\bf involutive category} and write it as a triple $\X = (\X, (-)^\dagger, \iota)$.  We shall call an involutive category whose involution is {\bf strict} (so that $X^\dagger = X$ and $\iota$ is the identity map) a {\bf strict involutive category} and use this term interchangeably with $\dagger$-category. 

\begin{example}~ 
\label{Example: involutive}
{\em
\begin{enumerate}[(1)]
\item All $\dagger$-categories as defined in \cite{Selinger} are strict involutive categories and there are many examples of these: 
\begin{enumerate}
\item Every groupoid (and, indeed, more generally, every inverse category\footnote{A category in which for every morphism $f$, there exists a unique map $g$ such that $fgf=f$ and $gfg=g$.}) is a $\dagger$-category with $x^\dagger = x^{-1}$  (respectively $x^\dagger = x^{(-1)}$). 
\item The category of relations with $R^\dagger = R^\circ = \{ (b,a) | (a,b) \in R \}$ being the transpose relation. 

\item The category of matrices over any rig (or ring), $R$, which has an involution is a $\dagger$-category with respect to the conjugate transpose.  A rig has a conjugation, $\overline{(\_)}$, when $\overline{1} = 1$, $\overline{ab} = \overline{b}~\overline{a}$, $\overline{0} = 0$, and $\overline{a + b} = \overline{a}+\overline{b}$.  When the rig is commutative, there is a trivial conjugation given by the identity.   One then obtains a strict involution by modifying the transpose:
\[ \infer[(\_)^\dagger]{(\overline{r_{i,j}})_{j<m,i<n}: m \to n}{( r_{i,j})_{i<n,j<m}: n \to m} \]
This gives the usual dagger (conjugate transpose) when $R = \mathbb{C}$ (where $\mathbb{C}$ is the complex numbers with the usual conjugation).
\end{enumerate}

\item Every $*$-autonomous category and compact closed category is an involutive category with the involution being the dual functor.
\item An elementary example of an involutive category is the category of symmetric reflexive graphs, ${\sf SRGraph}$.  An object is a set $X$ equipped with a symmetric reflexive relation, $\_\smile\_$ (with $x \smile x$ and, if $x \smile y$, then $y \smile x$). A morphism is an isomorphism/bijection between the underlying sets which preserves the relation (that is $\alpha: X \to Y$, which has an inverse, $\alpha^{-1}$ as a set map,  and is such that $x \smile x'$ implies $\alpha(x) \smile \alpha(x')$).

The involution $(\_)^\dagger: {\sf SRGraph}^{\rm op} \to {\sf SRGraph}$ sends $\alpha: (X,\smile_X) \to (Y,\smile_Y)$ to $\alpha^\dagger = \alpha^{-1}: (Y,\smile_Y^\dagger) \to (X,\smile_X^\dagger)$, where $x \smile_X^\dagger x'$ iff $x \not{\smile} x'$ or $x=x'$.  Notice that this is not a $\dagger$-category (as $(X,\smile_X) \not{=} (X,\smile_X)^\dagger$ - except for the empty and one element objects) but $\iota: X \to X^{\dagger\dagger}$ is the identity.
\item The integers with respect to the usual ordering is an involutive category with respect to negation -- notice this is not a dagger category as $n^\dagger:= -n \not{=} n$, unless $n=0$. More generally, any finite ordinal is an involutive category by reversing the order.
\end{enumerate}
}
\end{example}

\begin{remark} {\em The axiom $\iota_{X^\dagger} (\iota_X)^\dagger = 1_{X^\dagger}$ for achiral involution gives an adjoint equivalence:
\[ \xymatrix{ \mathbb{X} \ar@/^1pc/[rr]^{(\_)^\dagger} \ar@{}[rr]|\bot && \mathbb{X}^{\rm op} 
            \ar@/^1pc/[ll]^{(\_)^\dagger} } \]
with the unit and the counit being $\iota: X \to X^{\dagger\dagger}$.  It should be noted that most of the theory we develop does not require this identity.  As shall be seen, the identity is needed to make $\iota$ an achiral (natural) transformation.}
\end{remark}

\begin{definition}
\label{defn: involutive-functor}
    If $(\mathbb{X},(\_)^\dagger,\iota)$ and $( \mathbb{Y},(\_)^\ddagger,\iota')$ are categories with involution  then an {\bf (achiral) involutive functor} written as, 
$ (F,\gamma): (\mathbb{X},(\_)^\dagger,\iota) \to (\mathbb{Y},(\_)^\ddagger,\iota'), $ consists of a functor $F: \mathbb{X} \to \mathbb{Y}$ and a natural isomorphism  $\gamma_X: F(X^{\dagger}) \to F(X)^{\ddagger}$ (of the contravariant functors)  called the {\bf preservator} making the following diagram commute:
\[ \xymatrix{  F(X) \ar[d]_{\iota'} \ar[rr]^{F(\iota)} & &
              F(X^{\dagger\dagger}) \ar[d]^{\gamma_X} \\
              F(X)^{\ddagger\ddagger} \ar[rr]_{\gamma_X^\ddagger}& & F(X^\dagger)^{\ddagger} 
             } \]
Involutive functors compose with the composition defined as  $(F,\gamma)(G,\gamma') := (FG,G(\gamma)\gamma')$. 
\end{definition}

\begin{remark} 
{\em ~
\begin{enumerate}[(1)]
\item Here we are asking more coherence for functors than is usual.  Often one asks just for the existence of $\gamma_X: F(X^\dagger) \to F(X)^\dagger$ with no coherence requirement governing the interaction with $\iota$.  
\item For $\dagger$-categories one  asks that $F$ ``preserves'' the dagger, that is $F(X^\dagger)$ and $F(X)^\dagger$ are literally the same object.  This, and the fact that the involutor is the identity, collapses the coherence square above to the identity. This ensures that this coherence requirement is automatically satisfied in $\dagger$-categories. 
\item  For covariant involutions one can prove that the preservator must be an isomorphism from the coherence requirement \cite{Jacobs,Tau}.  However this does not appear to be true for the contravariant involution: so we have {\em required\/} that the preservator be an isomorphism.
\end{enumerate}
}
\end{remark}

\begin{definition}
If $(F,\gamma), (G,\gamma'): \mathbb{X} \to \mathbb{Y}$ are involutive functors then an {\bf (achiral) transformation} $\alpha: (F,\gamma) \to (G,\gamma')$ is a natural transformation $\alpha: F \to G$ which in addition satisfies:
\[ \xymatrix{ F(X) \ar[dd]_{F(\iota)}\ar[rr]^{\iota'_{F(X)}} \ar@{..>}[dr]_{\alpha_X} & & 
       F(X)^{\ddagger\ddagger} \ar@{..>}[dr]^{(\alpha_{X})^{\ddagger\ddagger}} \ar[dd]^<<<<<<<<<{\gamma_X^\ddagger}|\hole  \\ 
       & G(X) \ar[rr]_<<<<<<<<<<{\iota_{G(X)}'} \ar[dd]^<<<<<<<{G(\iota)}  && G(X)^{\ddagger\ddagger} \ar[dd]^{\gamma_{X^\dagger}} \\
       F(X^{\dagger\dagger}) \ar@{..>}[dr]_{\alpha_{X^{\dagger\dagger}}} \ar[rr]_>>>>>>>>{\gamma_{X^\dagger}}|\hole && F(X^\dagger)^\ddagger \\
       & G(X^{\dagger\dagger}) \ar[rr]_{\gamma'_{X^\dagger}} && G(X^\dagger)^\ddagger \ar@{..>}[lu]^{\alpha_{X^\dagger}^\ddagger} } \]
\end{definition}

Notice that the bottom right morphism is going in the ``wrong direction'':  we call this map the {\bf tether}.  The requirement that the squares in which the tether occurs must commute (the tethering requirement) has a significant consequence: 

\begin{lemma}
    The tethering requirement forces $\alpha$ to be a natural isomorphism.
\end{lemma}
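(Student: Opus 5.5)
The idea is to read off an explicit inverse for $\alpha_X$ from the face of the cube that contains the tether. The relevant face is the bottom-right square. Its left edge is $\gamma_{X^\dagger}: F(X^{\dagger\dagger}) \to F(X^\dagger)^\ddagger$ and its right edge is $\gamma'_{X^\dagger}: G(X^{\dagger\dagger}) \to G(X^\dagger)^\ddagger$. Going across are the forward map $\alpha_{X^{\dagger\dagger}}$ and the tether $\alpha_{X^\dagger}^\ddagger : G(X^\dagger)^\ddagger \to F(X^\dagger)^\ddagger$. Commutativity of this face says
\[ \alpha^\ddagger_{X^\dagger}\,\gamma'_{X^\dagger}\,\alpha_{X^{\dagger\dagger}} = \gamma_{X^\dagger}, \]
with composition written in diagrammatic order as the paper does. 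Since $\gamma_{X^\dagger}$ and $\gamma'_{X^\dagger}$ are isomorphisms by definition of involutive functor, this exhibits $\alpha_{X^{\dagger\dagger}}$ as a split monomorphism, with left inverse $\gamma'_{X^\dagger}\alpha^\ddagger_{X^\dagger}\gamma_{X^\dagger}^{-1}$. The same equation also shows that $\alpha^\ddagger_{X^\dagger}$ is a split epimorphism.

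Next I transport this information back to $\alpha_X$. Naturality of $\alpha$ with respect to the isomorphism $\iota_X : X \to X^{\dagger\dagger}$ gives $\alpha_X\, G(\iota) = F(\iota)\, \alpha_{X^{\dagger\dagger}}$, and $F(\iota)$, $G(\iota)$ are isomorphisms. So $\alpha_X$ is conjugate to $\alpha_{X^{\dagger\dagger}}$, and every object $X$ gives that $\alpha_X$ is split mono. Applying this with $X$ replaced by $X^\dagger$ shows that $\alpha_{X^\dagger}$ is split mono for all $X$.

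To obtain the other side I use the contravariant functor $(\_)^\ddagger$. It sends split monos to split epis and split epis to split monos. Hence $\alpha_{X^\dagger}^\ddagger$ is a split epimorphism, which we already know, but also a split monomorphism, since $\alpha_{X^\dagger}$ being a split epi would give that. The cleaner route is to use that $(\_)^\ddagger$ is an equivalence: it reflects isomorphisms because $\iota'$ is a natural iso. So it suffices to show that $\alpha^\ddagger_{X^\dagger}$ is an isomorphism.

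From the displayed equation, $\alpha^\ddagger_{X^\dagger}$ is split epi. From $\alpha_{X^\dagger}$ being split mono, its image $\alpha^\ddagger_{X^\dagger}$ is also split epi, so this alone is not enough. Here I bring in the remaining face, the top-right square involving $\alpha_X^{\ddagger\ddagger}$ and the tether:
\[ \alpha_X^{\ddagger\ddagger}\,\gamma'_{X^\dagger} = \gamma_X^\ddagger\,\alpha^\ddagger_{X^\dagger} . \]
Combining this with the bottom equation and naturality will give a composite that equals an isomorphism on the other side of the tether. Concretely, I substitute $\gamma_{X^\dagger} = \alpha^\ddagger_{X^\dagger}\gamma'_{X^\dagger}\alpha_{X^{\dagger\dagger}}$ and use naturality of $\gamma$ to rewrite $\alpha_{X^{\dagger\dagger}}$ followed by $\gamma'$. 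This should yield $\gamma'_{X^\dagger}\,\alpha^{\ddagger}_{X^\dagger}\,\gamma_{X^\dagger}^{-1}\,\alpha_{X^{\dagger\dagger}} = 1$, which makes $\alpha_{X^{\dagger\dagger}}$, and hence $\alpha_X$, an isomorphism.

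The main obstacle is exactly this step: securing both one-sided inverses rather than only a split mono. I expect the top face, through naturality of $\gamma'$ against $\alpha$, to supply the missing identity. If that fails, the fallback is the split-mono/split-epi duality argument above, applied at $X^\dagger$ and pulled back through $(\_)^\ddagger$ and $\iota'$.
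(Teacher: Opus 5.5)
Your first step is sound. In diagrammatic order the bottom face reads $\gamma_{X^\dagger}=\alpha_{X^{\dagger\dagger}}\gamma'_{X^\dagger}\alpha^\ddagger_{X^\dagger}$; your display lists the factors in applicative order, but the conclusion is right. So $\alpha_{X^{\dagger\dagger}}$ is a split mono, and by conjugating with $F(\iota)$ and $G(\iota)$ so is $\alpha_X$.

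The gap is exactly where you locate it, and the proposed repair does not work. Your ``top-right'' equation $\alpha_X^{\ddagger\ddagger}\gamma'_{X^\dagger}=\gamma_X^\ddagger\alpha^\ddagger_{X^\dagger}$ does not typecheck. The front right edge is really $(\gamma'_X)^\ddagger\colon G(X)^{\ddagger\ddagger}\to G(X^\dagger)^\ddagger$ (it is mislabelled in the figure), and the tether can only come last in a composite. The correct face is $\gamma_X^\ddagger=\alpha_X^{\ddagger\ddagger}(\gamma'_X)^\ddagger\alpha^\ddagger_{X^\dagger}$. This is just $(\_)^\ddagger$ applied to the bottom-face identity $\gamma_X=\alpha_{X^\dagger}\gamma'_X\alpha^\ddagger_X$, taken at $X$ rather than at $X^\dagger$. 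So both tethered faces carry the same one-sided information. No manipulation with naturality will produce the identity $\gamma'_{X^\dagger}\alpha^\ddagger_{X^\dagger}\gamma^{-1}_{X^\dagger}\alpha_{X^{\dagger\dagger}}=1$ that you need. Your duality fallback is circular, as you half-noticed: $\alpha^\ddagger_{X^\dagger}$ being a retraction is equivalent to $\alpha_{X^\dagger}$ being a section.

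In fact the gap cannot be closed, because the statement is false as written. Here is a counterexample:
\begin{itemize}
\item Take $\mathbb{X}=\mathbf{1}$ with the trivial involution, and $\mathbb{Y}=\mathsf{Rel}$ with $R^\dagger=R^\circ$.
\item Let $F$ and $G$ pick out $\{0\}$ and $\{0,1\}$ respectively, with identity preservators.
\item Let $\alpha=\{(0,0)\}\colon\{0\}\to\{0,1\}$.
\end{itemize}
Naturality and the top, left, front and back faces hold trivially. Both tethered faces reduce to $\alpha\alpha^\dagger=1_{\{0\}}$, which is true. Yet $\alpha$ is not invertible.

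What the tethering requirement actually gives is that each $\alpha_X$ is a section, with retraction $G(\iota_X)\gamma'_{X^\dagger}\alpha^\ddagger_{X^\dagger}\gamma^{-1}_{X^\dagger}F(\iota_X)^{-1}$. In the strict case this says $\alpha$ is an isometry, not a unitary, contrary to the remark following the lemma.

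The paper's own proof breaks at the same point. It asserts that $\alpha_X$ is a conjugate of the tether. But conjugating $\alpha^\ddagger_{X^\dagger}$ by the isomorphisms $\gamma$, $\gamma'$, $\iota$ yields precisely the map $G(X)\to F(X)$ just displayed, which points the opposite way to $\alpha_X$. So ``the tether is a retraction'' merely restates ``$\alpha_X$ is a section''. Some extra hypothesis is needed to conclude that $\alpha$ is invertible, for instance that this retraction is also a section.
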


\begin{proof}
To see this, first note that a morphism $f$ is a retraction (respectively a section) if and only if any conjugate, $\beta f \gamma^{-1}$ -- with $\beta$ and $\gamma$ invertible -- is a retraction (respectively a section).

Notice that the bottom square of the cube above forces $\alpha_{X^{\dagger\dagger}}$ to be a section: whence, as $\alpha_X = F(\iota) \alpha_{X^{\dagger\dagger}}G(\iota)^{-1}$, $\alpha_X$ is a section.  Now notice that the right-hand square forces $\alpha_{X^\dagger}^\ddagger$ to be a retraction, whence, as $\alpha_X$ is a conjugate of this, we also have that $\alpha_X$ is a retraction and hence an isomorphism.
\end{proof}
More generally, as we shall see when we discuss chiral transformations in a sequel paper, there is another less drastic way to explain ``tethering'' without requiring all transformations to be isomorphisms.  Furthermore, as we shall see, this has interesting structural ramifications.

\begin{remark}~
{\em
\begin{enumerate}[(1)]
\item
In the case of $\dagger$-categories both $\iota$ and $\gamma$ are identities and this 
forces $\alpha^{-1} = \alpha^\dagger$.  Thus, $\alpha$, in this case, is a unitary isomorphism \cite[Definition 2.3]{Selinger}.
\item As the tether is an isomorphism, its direction can be corrected.  Indeed, for a chiral transformation \cite{Mellies}, there are actually two natural transformations going in opposite directions: 
in the achiral situation this pair becomes $(\alpha,\alpha^{-1})$.
\end{enumerate} }
\end{remark}


It is clear that these transformations compose by just stacking these squares.  It is also clear that whiskering on the left gives a transformation:
\[ \xymatrix{ \mathbb{X} \ar[rr]^V &&  \mathbb{Y} \ar@/^1pc/[rr]^{F} \ar@/_1pc/[rr]_{G} \ar@{}[rr]|{\Downarrow \alpha} && \mathbb{Z}  \ar[rr]^W && \mathbb{W} } \]
 Whiskering on the right is more complicated  (see commuting diagram in Figure \ref{fig:Whisker} in Appendix \ref{Sec: App Achiral involutions}). This leads to the claim that:   
\begin{proposition}
Involutive categories, their functors, and their achiral transformations organize themselves into a 2-category, ${\sf aChiral}$ (in which all 2-cells are isomorphisms).
\end{proposition}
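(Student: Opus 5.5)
We check in turn the data of a 2-category: identity and composite 1-cells, vertical and horizontal composition of 2-cells, and the interchange and unit laws. The fact that all 2-cells are isomorphisms comes from the tethering Lemma above.

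\textbf{1-cells.} The identity 1-cell on $(\mathbb{X},(\_)^\dagger,\iota)$ is $(1_{\mathbb{X}},1)$, whose coherence square is trivial. For composites $(F,\gamma)(G,\gamma')=(FG,G(\gamma)\gamma')$, the preservator is a natural isomorphism because it is a composite of such. The coherence square for $FG$ is obtained by pasting three cells:
\begin{itemize}
\item the image under $G$ of the square for $F$;
\item the square for $G$ at $F(X)$;
\item naturality of $\gamma'$ with respect to the map $\gamma_X$.
\end{itemize}
Contravariance of $\gamma'$ makes the last square point the right way. Associativity and unit laws hold because they hold for functors and for the preservator formula, using functoriality of $H$ in $H(G(\gamma)\gamma')$.

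\textbf{Vertical composition.} The vertical composite of $\alpha:(F,\gamma)\to(G,\gamma')$ and $\beta:(G,\gamma')\to(H,\gamma'')$ is $\alpha\beta$ with componentwise composition. Its cube is obtained by stacking the two cubes: the front, back, top and left faces are naturality squares, which paste. The tether face uses $(\alpha\beta)_{X^\dagger}^\ddagger=\beta_{X^\dagger}^\ddagger\alpha_{X^\dagger}^\ddagger$, which is contravariance of $(\_)^\ddagger$, so the two tether squares paste as well. Identity 2-cells are identity transformations.

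\textbf{Whiskering.} Left whiskering by $V$ is immediate, since the cube for $V\alpha$ is just the cube for $\alpha$ at objects $V(X)$, up to the preservator of $V$ precomposed. Right whiskering by $(W,\delta)$ gives $W(\alpha)$. Its cube is obtained by applying $W$ to the cube for $\alpha$ and then gluing on two more pieces: naturality squares of $\delta$ (again contravariant), and the coherence square for $W$ relating $W(\iota')$ with $\iota''$. This is the diagram deferred to Figure~\ref{fig:Whisker}.

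The main obstacle is checking that the tether face survives this gluing. After applying $W$ the tether becomes $W(\alpha^\ddagger_{X^\dagger})$, while the cube for $W\alpha$ requires $(W\alpha_{X^\dagger})^{\ddagger'}$ composed with preservators. These are related by naturality of $\delta$ at the map $\alpha_{X^\dagger}$, and I would verify the face by this pasting.

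\textbf{Horizontal composition and the 2-category laws.} Define horizontal composition via whiskering, $\alpha * \beta := (\alpha W)(G\beta)$. The interchange law and the unit laws follow because they hold for the underlying natural transformations, and being an achiral transformation is a property of a natural transformation rather than extra structure. Associativity of whiskering holds for the same reason. Finally, every 2-cell is invertible by the tethering Lemma. The inverse $\alpha^{-1}$ is again an achiral transformation, since inverting every edge of the cube preserves commutativity. Hence ${\sf aChiral}$ is a 2-category all of whose 2-cells are isomorphisms.
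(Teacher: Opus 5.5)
Your proposal is correct and follows the paper's route, which the paper only sketches: vertical composition by stacking cubes, left whiskering as immediate, right whiskering via the pasting of Figure~\ref{fig:Whisker} (the key step being naturality of the preservator of $W$ at the components of $\alpha$), and invertibility of 2-cells from the tethering lemma. You also make explicit what the paper leaves implicit: coherence of composite 1-cells, horizontal composition and interchange, and the fact that $\alpha^{-1}$ is again achiral. On that last point, the cube for $\alpha^{-1}$ comes from reversing only the $\alpha$-edges, not every edge; this is harmless because all edges are isomorphisms.
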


There is a ready-to-hand example of such a transformation of involutive functors:

\begin{lemma}
For any achiral involutive category the involutor itself is an (achiral) transformation.
\end{lemma}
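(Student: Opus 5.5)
We read the lemma as asserting that $\iota: 1_{\X} \Rightarrow (\_)^{\dagger\dagger}$ is an achiral transformation between two involutive functors. The first step is to equip both functors with preservators. The identity functor $(1_{\X}, 1)$ has identity preservator, and its coherence square reduces to $\iota = \iota$. The functor $(\_)^{\dagger\dagger}: \X \to \X$ is a composite of two contravariant functors, so it is covariant. For its preservator we take $\gamma_X := (\iota_{X^\dagger})^{\dagger}: (X^\dagger)^{\dagger\dagger} \to (X^{\dagger\dagger})^\dagger$, which is natural and invertible. By the defining axiom $\iota_{X^\dagger}(\iota_X)^\dagger = 1$ (composition written diagrammatically), this equals $(\iota_X)^{\dagger\dagger\dagger}$. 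Before using it, we check the involutive-functor coherence square for $((\_)^{\dagger\dagger},\gamma)$. This square states that $\iota_{X^{\dagger\dagger}}\,\gamma_X^{\dagger} = (\iota_X)^{\dagger\dagger}\,\gamma_{X^\dagger}$. Both sides should reduce, via naturality of $\iota$ together with the axiom applied at $X^\dagger$ and $X^{\dagger\dagger}$, to the same map $X^{\dagger\dagger} \to X^{\dagger\dagger\dagger\dagger}$.

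The second step is to verify the faces of the cube in the definition of an achiral transformation, with $F = 1$, $G = (\_)^{\dagger\dagger}$ and $\alpha = \iota$. The top and bottom faces are instances of naturality of $\iota$, applied to $\iota_X$ and to the map $\iota_{X^{\dagger\dagger}}$ respectively, so they hold automatically. The left face also follows from naturality of $\iota$, applied to the morphism $\iota_X$ itself: it reads $\iota_X\,\iota_{X^{\dagger\dagger}} = \iota_X\,(\iota_X)^{\dagger\dagger}$. The front and back faces are the coherence squares of the two functors, which were handled in the first step.

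The real content lies in the two faces that involve the tether $\alpha_{X^\dagger}^\dagger = (\iota_{X^\dagger})^\dagger$. The right face compares $\alpha_X^{\dagger\dagger}$ followed by $\gamma_{X^\dagger}$ (the preservator of $(\_)^{\dagger\dagger}$) with $\gamma_X^\dagger$ (the preservator of the identity functor, that is $1^\dagger$) followed by the tether. After substituting, every edge becomes a $\dagger$-image of an $\iota$, and the equation should reduce to applying $(\_)^\dagger$ to the axiom $\iota_{X^\dagger}(\iota_X)^\dagger = 1$. The remaining face, through $F(X^{\dagger\dagger}) \to F(X^\dagger)^\dagger$, should similarly follow from the axiom combined with naturality.

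I expect the main obstacle to be bookkeeping rather than any real idea. The difficulty is choosing the correct preservator for $(\_)^{\dagger\dagger}$: the candidates $(\iota_{X^\dagger})^\dagger$, $(\iota_X)^{\dagger\dagger\dagger}$ and $\iota_{X^{\dagger\dagger\dagger}}$ may differ, so we must determine which one makes the tether faces commute. I would first settle on $(\iota_X)^{\dagger\dagger\dagger}$, since the axiom makes it equal to $(\iota_{X^\dagger})^{\dagger}$. I would then check each face by reducing both sides to a normal form using only naturality of $\iota$ and the axiom. This matches the earlier remark that the identity $\iota_{X^\dagger}(\iota_X)^\dagger = 1$ is exactly what makes $\iota$ an achiral transformation.
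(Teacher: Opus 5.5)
Your overall plan is the paper's: check the faces of the cube with $F=1$, $G=(\_)^{\dagger\dagger}$, $\alpha=\iota$, using naturality of $\iota$ and the axiom $\iota_{X^\dagger}(\iota_X)^\dagger=1$. Your top and left faces are right. However, the one concrete choice the rest depends on, the preservator of $(\_)^{\dagger\dagger}$, is ill-typed.

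The preservator must be a natural isomorphism $(X^\dagger)^{\dagger\dagger}\to(X^{\dagger\dagger})^\dagger$, i.e.\ an endomorphism of $X^{\dagger\dagger\dagger}$. Since $\iota_{X^\dagger}:X^\dagger\to X^{\dagger\dagger\dagger}$, your $(\iota_{X^\dagger})^\dagger$ is a map $X^{\dagger\dagger\dagger\dagger}\to X^{\dagger\dagger}$. Likewise $(\iota_X)^{\dagger\dagger\dagger}:X^{\dagger\dagger\dagger\dagger\dagger}\to X^{\dagger\dagger\dagger}$ and $\iota_{X^{\dagger\dagger\dagger}}:X^{\dagger\dagger\dagger}\to X^{\dagger\dagger\dagger\dagger\dagger}$. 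So none of your three candidates has the required type.

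The asserted equality $(\iota_{X^\dagger})^\dagger=(\iota_X)^{\dagger\dagger\dagger}$ also compares maps with different domains. What the axiom gives after applying $(\_)^\dagger$ is $(\iota_X)^{\dagger\dagger}(\iota_{X^\dagger})^\dagger=1$, i.e.\ $(\iota_{X^\dagger})^\dagger=((\iota_X)^{\dagger\dagger})^{-1}$. That map is exactly the tether $\alpha_{X^\dagger}^\dagger$, so you have conflated the tether with a preservator. Consequently the coherence square for $G$ and both tether faces are left at ``should reduce'', with no well-typed data to reduce.

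The missing observation is that $(X^\dagger)^{\dagger\dagger}$ and $(X^{\dagger\dagger})^\dagger$ are literally the same object. So the preservator of $(\_)^{\dagger\dagger}$ can simply be the identity, which is what the paper uses. With that choice:
\begin{itemize}
\item The coherence square for $G$ becomes $\iota_{X^{\dagger\dagger}}=(\iota_X)^{\dagger\dagger}$. This follows from your own naturality equation $\iota_X\iota_{X^{\dagger\dagger}}=\iota_X(\iota_X)^{\dagger\dagger}$ by cancelling the isomorphism $\iota_X$.
\item The right tether face becomes $(\iota_X)^{\dagger\dagger}(\iota_{X^\dagger})^\dagger=1$, the dagger of the axiom.
\item The bottom face becomes $\iota_{X^{\dagger\dagger}}(\iota_{X^\dagger})^\dagger=1$, the axiom at $X^\dagger$.
\end{itemize}
Note that the bottom face is the other tether face, not a naturality square at $\iota_{X^{\dagger\dagger}}$ as you state; you count it twice.
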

The proof is given in Appendix \ref{Sec: App Achiral involutions}.



\section{Unitary categories}

The notion of a unitary category was introduced in \cite{Priyaa} in the context of dagger isomix categories. Here, we extract the essence of the notion to involutive categories. 

An involutive category, $\mathbb{X}$ is a {\bf unitary  category} \cite[Definition 4.1]{Priyaa} in case for every object $X \in \mathbb{X}$ there is an isomorphism 
$\varphi_X: X \to X^\dagger$, called the {\bf unitary structure map} of $X$ (not assumed to be natural) such that: 
\begin{enumerate}[{\bf [U.1]}]
\item$\varphi_{X^\dagger} = (\varphi_X^{-1})^\dagger$ where $\varphi_{X^\dagger}: X^\dagger \to X^{\dagger\dagger}$;
\item $\varphi_X  \varphi_{X^\dagger} = \iota_X$ where $\iota_X: X \to X^{\dagger \dagger}$.
\end{enumerate}
 A unitary category is thus a quadruple consisting of the involutive category and a unitary structure --- $\X = (\X, (-)^\dagger, \iota, \varphi)$.
Unitary structure maps satisfy a number of identities:
\begin{lemma} \label{unitary-identities}
The unitary structure maps satisfy:
\[ (i)~~\varphi_X^\dagger = \varphi_{X^\dagger}^{-1}; \quad\quad
(ii)~~\varphi_X = \iota (\varphi_X^\dagger); \quad\quad (iii)~~\varphi_X^{-1}\iota =  (\varphi_X^{-1})^\dagger; \quad\quad (iv)~~\iota (\varphi_{X^\dagger})^\dagger \iota^{-1} = \varphi_X^{-1}. \]
\end{lemma}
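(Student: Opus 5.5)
All four identities should follow by short equational manipulation from \textbf{[U.1]} and \textbf{[U.2]}, together with functoriality of the contravariant dagger. I read composition in diagrammatic order, so $fg$ means ``$f$ then $g$''; this is the order used in \textbf{[U.2]}, where $\varphi_X\varphi_{X^\dagger}: X \to X^\dagger \to X^{\dagger\dagger}$. Hence $(fg)^\dagger = g^\dagger f^\dagger$, and the dagger of an isomorphism is an isomorphism with $(f^{-1})^\dagger = (f^\dagger)^{-1}$. I will check the domain and codomain of each composite as I go, because the indices on $\iota$ are left implicit in the statement.

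For (i), I rewrite \textbf{[U.1]} as $\varphi_{X^\dagger} = (\varphi_X^{-1})^\dagger = (\varphi_X^\dagger)^{-1}$ and invert both sides. For (ii), I solve \textbf{[U.2]} for $\varphi_X$, which gives $\varphi_X = \iota_X\varphi_{X^\dagger}^{-1}$, and then substitute (i) to get $\varphi_X = \iota_X\varphi_X^\dagger$. The types match: $X \to X^{\dagger\dagger} \to X^\dagger$. For (iii), \textbf{[U.2]} gives $\varphi_{X^\dagger} = \varphi_X^{-1}\iota_X$, and \textbf{[U.1]} identifies the left-hand side with $(\varphi_X^{-1})^\dagger$. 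Both sides are maps $X^\dagger \to X^{\dagger\dagger}$.

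Identity (iv) is the only step needing care, and the care is only in typing. The first $\iota$ must be $\iota_{X^\dagger}: X^\dagger \to X^{\dagger\dagger\dagger}$, followed by $(\varphi_{X^\dagger})^\dagger : X^{\dagger\dagger\dagger}\to X^{\dagger\dagger}$, and then $\iota_X^{-1}$. This makes the whole composite a map $X^\dagger \to X$, as $\varphi_X^{-1}$ is. With that reading, I apply (ii) at the object $X^\dagger$ to get $\iota_{X^\dagger}(\varphi_{X^\dagger})^\dagger = \varphi_{X^\dagger}$. By (iii), or directly by \textbf{[U.2]}, this equals $\varphi_X^{-1}\iota_X$. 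Post-composing with $\iota_X^{-1}$ then yields $\varphi_X^{-1}$.

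I expect no genuine obstacle. The unitary structure maps are not assumed natural, so I must never move a $\varphi$ past a map; the argument uses only the two axioms at $X$ and $X^\dagger$ and the functoriality of $(\_)^\dagger$, and never uses the involution coherence $\iota_{X^\dagger}(\iota_X)^\dagger = 1$.
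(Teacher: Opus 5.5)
Your proof is correct. Parts (i)–(iii) are the direct rearrangements of \textbf{[U.1]} and \textbf{[U.2]}. Your typing of (iv) as $\iota_{X^\dagger}(\varphi_{X^\dagger})^\dagger\iota_X^{-1}\colon X^\dagger\to X$ is the right reading.

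The paper states this lemma without proof. The only derivation it gives is of (iv), inline in the proof of Lemma~\ref{inner-to-unitary}, and that derivation takes a slightly different route from yours. It first uses \textbf{[U.1]} to rewrite $(\varphi_{X^\dagger})^\dagger$ as $(\varphi_X^{-1})^{\dagger\dagger}$. It then applies naturality of $\iota$ at the map $\varphi_X^{-1}\colon X^\dagger\to X$, namely $\iota_{X^\dagger}(\varphi_X^{-1})^{\dagger\dagger}=\varphi_X^{-1}\iota_X$, and cancels $\iota_X$.

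The two routes rest on different ingredients:
\begin{itemize}
\item The paper's argument needs naturality of $\iota$ and \textbf{[U.1]} at $X$, but never touches \textbf{[U.2]}.
\item Your argument avoids naturality of $\iota$ entirely. Instead it uses \textbf{[U.2]} at $X$, together with (ii) instantiated at $X^\dagger$, which in turn uses \textbf{[U.1]} and \textbf{[U.2]} at $X^\dagger$.
\end{itemize}
Both are valid. Yours has the small advantage, which you correctly point out, of relying only on the axioms for the unitary structure maps and functoriality of $(\_)^\dagger$.
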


\begin{example}{\em ~
\begin{enumerate}[(1)]
\item Any $\dagger$-category has a unitary structure given by the identity map.
\item The standard example of a unitary $\dagger$-category is the category of finite dimensional vector spaces:  every finite dimensional space is isomorphic to its dual, albeit in an unnatural way (depends on the choice of a basis): these isomorphisms give a unitary structure \cite[Section 4.4.1]{Priyaa}.
\item
The integers under the usual ordering with the negation as involution is a simple example of a non-unitary category. 
\end{enumerate}
} \end{example}

\begin{definition}
      An involutive functor $(F, \gamma): (\mathbb{X}, (\_)^{\dagger}, \iota) \to (\mathbb{Y}, (\_)^{\ddagger}, \iota')$, between unitary categories is a {\bf unitary functor} if for every $X \in \mathbb{X}$, the following diagram commutes:
\[\xymatrix{ F(X) \ar[d]_{F(\varphi_X)} \ar[drr]^{\varphi_{F(X)}} 
   & &  \\  F(X^\dagger) \ar[rr]_{\gamma_X} & &F(X)^\dagger }\]
\end{definition}

\begin{remark}{\em ~
 \begin{enumerate}[(1)]
    \item For a unitary functor $(F,\gamma)$, the unitary structure determines the preservator as $\gamma_X= F(\varphi_X^{-1}) \varphi_{F(X)}$ and this forces the preservator to be an isomorphism (recall that, in general, we had to require this).
    \item A unitary functor between $\dagger$-categories necessarily preserves the involution on the nose as the preservator $\gamma: F(X^\dagger) \to F(X)^\dagger$ is the identity as $F(\varphi_X)\gamma = \varphi_{F(X)}$ and the unitary structure maps are all identities forcing $\gamma$ to be the identity.
  \end{enumerate}}
\end{remark}

\begin{lemma}~ \label{Lemma 5.6}
\begin{enumerate}[(i)]
    \item Unitary functors between $\dagger$-categories are necessarily strict involutive functors which preserve the dagger on the nose, $F(X^\dagger) = F(X)^\dagger$;
    \item For unitary functors, the underlying functor determines the preservator.
\end{enumerate}
\end{lemma}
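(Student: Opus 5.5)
Both parts follow directly from the defining triangle of a unitary functor. We solve that triangle for the preservator.

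Let $(F,\gamma)$ be a unitary functor. The definition gives $F(\varphi_X)\gamma_X = \varphi_{F(X)}$ for every object $X$. Here $\varphi_X$ is an isomorphism, so $F(\varphi_X)$ is one as well, with inverse $F(\varphi_X^{-1})$. Hence
\[ \gamma_X = F(\varphi_X^{-1})\,\varphi_{F(X)} \]
(composition in diagrammatic order, as in the paper). This formula expresses $\gamma_X$ using only the underlying functor $F$ and the unitary structures of $\mathbb{X}$ and $\mathbb{Y}$.

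\emph{Part (ii).} Suppose $(F,\gamma)$ and $(F,\gamma')$ are unitary functors with the same underlying functor $F$. Both preservators equal $F(\varphi_X^{-1})\varphi_{F(X)}$, so $\gamma=\gamma'$. This also shows again that $\gamma$ is automatically invertible, as noted in the preceding remark.

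\emph{Part (i).} Now take $\mathbb{X}$ and $\mathbb{Y}$ to be $\dagger$-categories with their canonical unitary structure, so $\varphi_X=1_X$ and $\varphi_{F(X)}=1_{F(X)}$. These identity maps are the structure maps that ``unitary functor'' refers to here. Substituting into the formula gives $\gamma_X = F(1_X)\,1_{F(X)} = 1_{F(X)}$.

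A morphism can only be an identity if its domain and codomain coincide. The domain of $\gamma_X$ is $F(X^\dagger)$ and its codomain is $F(X)^\dagger$, so $F(X^\dagger)=F(X)^\dagger$ on the nose. Naturality of $\gamma$, applied to a map $f\colon X\to Y$, reads $F(f^\dagger)\gamma_X=\gamma_Y F(f)^\dagger$ with appropriate sources. With identity components this gives $F(f^\dagger)=F(f)^\dagger$. So $F$ is a strict involutive functor.

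The coherence square of Definition~\ref{defn: involutive-functor} needs no separate check. Both involutors are identities and $\gamma$ is the identity, so the square holds trivially.

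There is no real obstacle. The only point needing care is that part (i) relies on the canonical identity unitary structure on $\dagger$-categories; with any other unitary structure the conclusion would fail.
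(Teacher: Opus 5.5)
Your proof is correct and matches the paper's argument. Like the paper (in its proof and the preceding remark), you solve the unitary-functor triangle to get $\gamma_X = F(\varphi_X^{-1})\varphi_{F(X)}$: this yields uniqueness of the preservator for (ii), and for (i) the identity unitary structures force $\gamma$ to be the identity. Your additional use of naturality of $\gamma$ to obtain $F(f^\dagger)=F(f)^\dagger$ on maps makes explicit a step the paper leaves implicit.
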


\begin{proof} It remains to prove the latter statement. Suppose $(F,\gamma)$ and $(F,\gamma')$ are unitary functors with the same underlying functor, $F$, then, as $\varphi_{F(X)} = F(\varphi_X) \gamma$, we have $\gamma= F(\varphi_X^{-1}) \varphi_{F(X)}$.
\end{proof}

\begin{lemma}
    Unitary categories with unitary functors and achiral transformations form a 2-category, ${\sf UNITARY}$.
\end{lemma}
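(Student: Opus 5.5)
The plan is to exhibit ${\sf UNITARY}$ as a locally full sub-2-category of ${\sf aChiral}$, using the Proposition that involutive categories, involutive functors and achiral transformations form a 2-category. The 0-cells are involutive categories carrying a unitary structure $\varphi$, the 1-cells are involutive functors satisfying the unitary-functor triangle, and the 2-cells are all achiral transformations between them. Composition of 2-cells, whiskering, and all the 2-categorical axioms (associativity, unit laws, interchange) are then inherited from ${\sf aChiral}$. It therefore suffices to check two things: identities are unitary functors, and unitary functors are closed under the composition of involutive functors.

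For identities, take $(1_{\mathbb{X}}, 1)$ with preservator the identity. The unitary triangle reads $\varphi_X \cdot 1 = \varphi_X$, which holds trivially.

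For composition, I follow the paper's diagrammatic convention, so $(F,\gamma)(G,\gamma') = (FG, G(\gamma)\gamma')$ with $F:\mathbb{X}\to\mathbb{Y}$ applied first. The requirement is
\[ FG(\varphi_X)\, G(\gamma_X)\,\gamma'_{F(X)} = \varphi_{FG(X)}. \]
This follows in two steps:
\begin{enumerate}[(1)]
\item By functoriality of $G$, $FG(\varphi_X)\,G(\gamma_X) = G\big(F(\varphi_X)\gamma_X\big)$, and the unitary triangle for $F$ turns this into $G(\varphi_{F(X)})$.
\item The unitary triangle for $G$ at the object $F(X)$ gives $G(\varphi_{F(X)})\,\gamma'_{F(X)} = \varphi_{G(F(X))}$.
\end{enumerate}
Together these yield the required equation. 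Associativity of 1-cell composition holds already in ${\sf aChiral}$, so nothing further is needed there.

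The one step I expect to need care is confirming that whiskering, in particular the more complicated right whiskering in Figure~\ref{fig:Whisker}, does not leave the sub-2-category. Since the 2-cells are \emph{all} achiral transformations between unitary functors, and whiskering an achiral transformation by a unitary functor is an achiral transformation between the composite functors, it suffices that those composites are unitary, which step (2) above establishes. So the obstacle reduces to bookkeeping about the order of composition, namely ensuring the preservator of the composite is $G(\gamma)\gamma'$ rather than the reverse. No new coherence checks are required, because Lemma~\ref{Lemma 5.6}(ii) shows the preservators are determined by the underlying functors.
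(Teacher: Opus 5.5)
Your proof is correct and follows the paper's argument. The identity is trivially unitary, closure under composition is the same two-step computation $G(F(\varphi_X))\,G(\gamma_X)\,\gamma'_{F(X)} = G(\varphi_{F(X)})\,\gamma'_{F(X)} = \varphi_{G(F(X))}$, and everything else, whiskering included, is inherited from ${\sf aChiral}$; your subscript $\gamma'_{F(X)}$ is the correct one, and the closing appeal to the preservator-uniqueness lemma is harmless but not needed.
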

\begin{proof}
It is clear that the identity functor (with the identity natural transformation) is a unitary functor. We now show that the composition of two unitary functors is unitary:
\[
    G(F(\varphi_X)) G(\gamma_X) \gamma'_X  =  G(\phi_{F(X)}) \gamma'_{F(X)} = \psi_{G(F(X))}
\]
Unitality and associativity of composition follow from that of involutive functors. 
\end{proof} 

 We now construct a 2-coreflection between unitary categories and involutive categories.  It is clear that the unitary categories form a sub-2-category of involutive categories, ${\cal I}$ $:  {\sf UNITARY} \to {\sf aChiral}$.  We extract a unitary category from an arbitrary involutive category adapting the unitary construction \cite[Definition 4.12]{Priyaa}.

Given an involutive category $\X$ we form ${\sf Unitary}(\X)$ as follows:

\begin{description}
    \item[Objects:] Pairs $(X, h: X \to X^\dagger)$, where $X$ is an object of $\mathbb{X}$ and $h$ is an isomorphism such that $h (h^{-1})^\dagger = \iota$. Each such pair is called a pre-unitary object of $\X$.
    \item[Maps:]  A map $f: (X, h: X \to X^\dagger) \to (Y, h': Y \to Y^\dagger)$ is any map $f: X \to Y$ of $\X$.
    \item[Involution:]  On objects is $(X,h)^\dagger := (X,h^{-1})^\dagger$ and, on maps and for $\iota$, as in $\X$.
\end{description}

We revisit some of the examples of involutive categories (from Example \ref{Example: involutive}) to illustrate the pre-unitary objects in these categories:
\begin{example}~ {\em 
    \begin{enumerate}[(1)]
        \item Let $\X$ be any $\dagger$-category. The pre-unitary objects of $\X$ are $(X, h)$ where $h$  is  a Hermitian isomorphism, $(f=f^\dagger)$ and $\mathbb{X}$ embeds as a full subcategory into $\sf Unitary(\mathbb{X)}$ by sending an object $X$ to $(X, id_X)$.
        \item For the category ${\sf SRGraph}$, recall that the dagger of a map is just its inverse function. For a pre-unitary object we need to have an isomorphism between $h: X \to X^\dagger$ which must preserves the relation and, furthermore, is such that $h (h^{-1})^\dagger = 1$. In ${\sf SRGraph}$ the latter identity means that the underlying set function has $hh =1$. Suppose $h(a) = b$ with $a \not= b$ then if $a \sim b $ then $h(a) \not\sim h(b)$ and conversely if $a \not\sim b$ then $h(a) \sim h(b)$ so that there cannot be any distinct points which are swapped. This forces $h$ to be the identity map; however, a similar argument shows that $h$ cannot fix two distinct points in this case either. Thus, the cardinality of the underlying set must be at most $1$. So the pre-unitary objects of ${\sf SRGraph}$ are precisely the empty relation on the null set and the unique reflexive relation on a singleton set.
        \item In the posetal category of integers, the {\sf Unitary} construction produces the one-object discrete category $0$.  Note that for the subcategory of non-zero integers there are no pre-unitary objects.
        \end{enumerate} }
\end{example}

\medskip
Given $F: \X \to \Y$ an achiral functor, then we may transfer it onto the unitary objects by taking $h: X \to X^\dagger$ to $F(h)\gamma: F(X) \to F(X)^\dagger$.  We must check that  $F(h)\gamma: F(X) \to F(X)^\dagger$ is a pre-unitary object:
\begin{eqnarray*}
 F(h)\gamma ((F(h)\gamma)^{-1})^\dagger & = &  F(h) \gamma (\gamma^{-1} F(h)^{-1})^\dagger = F(h) \gamma  (F(h)^{-1})^\dagger (\gamma^{-1})^\dagger \\
 & = & F(hh^{-1\dagger}) \gamma (\gamma^{-1})^\dagger = F(\iota) \gamma (\gamma^{-1})^\dagger = \iota.  
\end{eqnarray*}

That $F$ on pre-unitary objects is a unitary functor follows from the definition of $F$ on objects.  Thus, ${\sf Unitary}: {\sf aChiral} \to {\sf UNITARY}$ is a 2-functor:

\begin{proposition} There is a 2-adjunction: 

  \[ {\sf UNITARY}(\mathbb{U},{\sf Unitary(\X)) \simeq {\sf aChiral}}({\cal I}(\mathbb{U}),\X) \]
  where $\mathbb{U}$ is a unitary category and $\mathbb{X}$ is an involutive category with ${\sf Unitary}({\cal I}(\mathbb{U})) \simeq \mathbb{U}$ making this a 2-coreflection. 
\end{proposition}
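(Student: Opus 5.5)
I will exhibit the unit and counit of the 2-adjunction explicitly and check the triangle identities; the coreflection claim then amounts to showing that the unit is an equivalence of unitary categories.

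\textbf{Counit.} For an involutive category $\X$, take $\epsilon_\X: {\cal I}({\sf Unitary}(\X)) \to \X$ to be the forgetful functor $(X,h)\mapsto X$, $f \mapsto f$. The involution on ${\sf Unitary}(\X)$ is computed as in $\X$ on underlying objects, maps and $\iota$. Hence $\epsilon_\X$ is a strict involutive functor with preservator the identity, and the coherence square of Definition \ref{defn: involutive-functor} is trivial.

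\textbf{Unit.} For a unitary category $\mathbb{U}$, take $\eta_{\mathbb{U}}: \mathbb{U} \to {\sf Unitary}({\cal I}(\mathbb{U}))$ to be $X \mapsto (X,\varphi_X)$, identity on maps. First, $(X,\varphi_X)$ is pre-unitary. By Lemma \ref{unitary-identities}(i) we have $(\varphi_X^{-1})^\dagger=\varphi_{X^\dagger}$, so $\varphi_X(\varphi_X^{-1})^\dagger = \varphi_X\varphi_{X^\dagger}=\iota$ by [U.1] and [U.2]. For the preservator, $\eta(X^\dagger)=(X^\dagger,\varphi_{X^\dagger})$ while $\eta(X)^\dagger=(X^\dagger,(\varphi_X^{-1})^\dagger)$. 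These coincide by [U.1], so $\eta$ is strict with identity preservator. It is unitary because $\varphi_{\eta(X)}=\varphi_X=\eta(\varphi_X)$.

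\textbf{Hom-equivalence.} The equivalence of hom-categories sends a unitary functor $(G,\gamma):\mathbb{U}\to{\sf Unitary}(\X)$ to $\epsilon_\X\circ {\cal I}(G)$. Its inverse sends an involutive $(F,\gamma):{\cal I}(\mathbb{U})\to\X$ to $X\mapsto (F(X),F(\varphi_X)\gamma_X)$. This object is pre-unitary by the computation just before the proposition, and the resulting functor is unitary by construction.

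Going round the two composites:
\begin{itemize}
\item From $(F,\gamma)$ one recovers $F$ on the nose.
\item From $(G,\gamma)$ one recovers $G$ exactly. By Lemma \ref{Lemma 5.6}(ii) the preservator of a unitary functor is determined by the underlying functor, so $G(X)=(G_0X,h_X)$ with $h_X=\varphi_{G(X)}=G_0(\varphi_X)\gamma_X$, which is exactly what the reconstruction produces.
\end{itemize}
On 2-cells both sides use the same natural isomorphisms, with the same achiral (tethering) condition, because maps in ${\sf Unitary}(\X)$ are just maps of $\X$. Naturality in $\mathbb{U}$ and $\X$ is routine whiskering.

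\textbf{Coreflection.} It remains to show ${\sf Unitary}({\cal I}(\mathbb{U}))\simeq\mathbb{U}$ via $\eta$. The unit is fully faithful since both categories have the hom-sets of $\mathbb{U}$. For essential surjectivity, given a pre-unitary $(X,h)$ I will show that $1_X:(X,\varphi_X)\to(X,h)$ is invertible, which is immediate since any map of $\X$ is a map in ${\sf Unitary}(\X)$. The real content is producing a quasi-inverse as an \emph{involutive/unitary} functor $R:(X,h)\mapsto X$. Its preservator $R((X,h)^\dagger)=X^\dagger\to X^\dagger$ can be taken to be the identity, but unitarity then requires $R(h)=\varphi_X$, which fails. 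So I instead take $R$ with preservator $\gamma_{(X,h)}=h^{-1}\varphi_X$ composed appropriately (the formula from Lemma \ref{Lemma 5.6}). I will check the involutive coherence square using $h(h^{-1})^\dagger=\iota$ together with Lemma \ref{unitary-identities}(ii),(iii).

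\textbf{Main obstacle.} This last verification, together with the requirement that $\eta R\cong 1$ and $R\eta\cong 1$ be \emph{achiral} transformations, is the main obstacle: the tethering squares involve $\gamma$ nontrivially. I would first attempt the identity-component isomorphisms and check the cube of the transformation definition directly, adjusting components to $h^{-1}\varphi_X$-type maps if the tether square fails.
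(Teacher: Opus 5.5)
Your unit $\eta_{\mathbb U}$, counit $\epsilon_\X$ and hom-correspondence are correct. They are the paper's construction ($h\mapsto F(h)\gamma$ on pre-unitary objects) made explicit, and they even give an isomorphism of hom-categories. The gap is the coreflection step, and checking the cube more carefully will not close it. Your $R\colon (X,h)\mapsto X$ with preservator $h^{-1}\varphi_X$ is not an involutive functor at all. For two pre-unitary structures $h,k$ on the same $X$, the map $1_X\colon (X,h)\to(X,k)$ lies in ${\sf Unitary}({\cal I}(\mathbb U))$ and has dagger $1_{X^\dagger}$. Naturality of the preservator therefore forces $\gamma_{(X,h)}=\gamma_{(X,k)}$. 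Unitarity, however, forces $\gamma_{(X,h)}=R(h)^{-1}\varphi_{R(X,h)}$, which depends on $h$.

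Moreover, no choice of $R$ works, because $\eta_{\mathbb U}$ is in general not an equivalence in ${\sf UNITARY}$. Let $\mathbb U$ be finite-dimensional Hilbert spaces with identity unitary structure. Its pre-unitary objects are $(X,h)$ with $h$ invertible Hermitian, e.g.\ $(\mathbb C,-1)$, and $(X,h)^\dagger=(X,h^{-1})$. For unitary $R$ one gets $\gamma^R_{(X,h)}=R(h)^{-1}$. An achiral isomorphism $\alpha$ from the identity to $R$-then-$\eta$ must satisfy two conditions:
\begin{itemize}
\item the tether condition $1=\alpha_{(X,h^{-1})}R(h)^{-1}\alpha_{(X,h)}^\dagger$;
\item naturality along $h\colon(X,h)\to(X,h^{-1})$, namely $h\,\alpha_{(X,h^{-1})}=\alpha_{(X,h)}R(h)$.
\end{itemize}
Together these give $h=\alpha_{(X,h)}\alpha_{(X,h)}^\dagger$, a positive map, which is impossible for $h=-1$. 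Another way to see this: if $\eta$ were an equivalence in ${\sf UNITARY}$, then ${\cal I}$ would be locally essentially surjective. But $({\sf Id},-1)$ is an involutive endofunctor of this category that is not isomorphic to any unitary functor.

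What does hold, and what you should prove instead, is the equivalence in ${\sf aChiral}$. Use $\epsilon_{{\cal I}(\mathbb U)}$ (strict, identity preservator) as the quasi-inverse. Then $\eta$ followed by $\epsilon$ is literally the identity. The maps $1_X\colon(X,h)\to(X,\varphi_X)$ form an achiral isomorphism from the identity to $\epsilon$ followed by $\eta$; the tether condition is trivial because both preservators are identities. So the ``coreflection'' is valid only in the sense that ${\cal I}(\eta_{\mathbb U})$ is an equivalence of involutive categories. The paper gives no argument for this part.
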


\section{Inner product categories}

Our next aim is to show that every unitary category comes equipped with an inner product structure and, indeed, to show that this structure actually characterizes unitary categories.  To this end, we start with:

\begin{definition}
   An {\bf inner product category} is an involutive category which has an {\bf inner product combinator}:
   \[ \infer{\langle f|g \rangle: A \to B^\dagger}{f: A \to X & g: B \to X} \]
    satisfying axioms {\bf [IP.1]}--{\bf [IP.4]} listed below.

    \begin{enumerate}[{\bf [{IP}.1]}]
        \item {\bf Pre-composition:} $\langle hf | kg \rangle = h \langle f | g \rangle k ^\dagger$.
        \item {\bf Dagger and duals:} (a) $\iota \langle f^\dagger | g^\dagger \rangle^\iota = \langle f | g \rangle $ and (b) $\iota \langle f | g \rangle^\dagger = \langle f | g \rangle$.
         \item Given $hg = 1$, $\langle f | g \rangle \langle h | k \rangle^\iota = fk$
        \item Given $gh = 1$, $\langle f | g \rangle^\iota \langle h | k \rangle = (kf)^\dagger$,
    \end{enumerate} 
    where the map $\langle h | k \rangle^\iota: A^\dagger \to B$ is the {\bf dual inner product} of $h: X \to A$ and $k: X \to B$   defined as follows:    
        \[ \text{\bf[dual-IP] } \quad \quad  
        \begin{tikzcd}
        	{A^\dagger} & {A^{\dagger \dagger \dagger}} \\
        	B & {B^{\dagger \dagger}}
        	\arrow["{\iota_{A^\dagger}}", from=1-1, to=1-2]
        	\arrow["{\langle h | k \rangle^\iota}"', from=1-1, to=2-1]
        	\arrow["{:=}"{description}, draw=none, from=1-1, to=2-2]
        	\arrow["{\langle k^\dagger | h^\dagger \rangle^\dagger}", from=1-2, to=2-2]
        	\arrow["{\iota_B^{-1}}", from=2-2, to=2-1]
        \end{tikzcd} \]
\end{definition}
For the types of maps in {\bf [{IP}.1] - [{IP}.4]}, see, Definition \ref{Defn: App inner product} in Appendix \ref{Sec: App inner product}.

\begin{lemma}
\label{Lem:double-dagger-inner-product}
In an inner product category, the following are equivalent: 
\begin{enumerate}[(a)]
    \item Either {\bf [IP.2](a)} or {\bf [IP.2](b)} holds, and {\bf [IP.2](c)} $\langle f^{\dagger \dagger} | g^{\dagger \dagger} \rangle = \iota_A^{-1} \langle f | g \rangle \iota_{B^\dagger}$ holds;
    \item Both {\bf [IP.2](a)} and {\bf [IP.2](b)} hold.
\end{enumerate}
\end{lemma}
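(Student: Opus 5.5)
The plan is to reduce everything to one computation. It shows that, in the presence of {\bf [IP.2](c)}, the left-hand sides of {\bf [IP.2](a)} and {\bf [IP.2](b)} are literally the same map. A reverse computation, using faithfulness of $(\_)^\dagger$, then recovers {\bf [IP.2](c)} from (a) and (b) together.

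First I fix the types. For $f: A \to X$ and $g: B \to X$ we have $f^\dagger: X^\dagger \to A^\dagger$ and $g^\dagger: X^\dagger \to B^\dagger$. Unfolding {\bf [dual-IP]} gives
\[ \langle f^\dagger | g^\dagger\rangle^\iota = \iota_{A^{\dagger\dagger}}\, \langle g^{\dagger\dagger} | f^{\dagger\dagger}\rangle^\dagger\, \iota_{B^\dagger}^{-1}, \]
so {\bf [IP.2](a)} reads $\iota_A\,\iota_{A^{\dagger\dagger}}\,\langle g^{\dagger\dagger} | f^{\dagger\dagger}\rangle^\dagger\,\iota_{B^\dagger}^{-1} = \langle f|g\rangle$. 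I read {\bf [IP.2](b)} as conjugate symmetry, $\iota\,\langle g|f\rangle^\dagger = \langle f|g\rangle$, which is the reading under which the types match. The only further tool is the involution axiom $\iota_{Y^\dagger}(\iota_Y)^\dagger = 1$, used in the two forms $(\iota_Y)^\dagger = \iota_{Y^\dagger}^{-1}$ and $(\iota_Y^{-1})^\dagger = \iota_{Y^\dagger}$.

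\emph{From (a) to (b).} Assume {\bf [IP.2](c)}, applied with the roles of $f,g$ swapped: $\langle g^{\dagger\dagger}|f^{\dagger\dagger}\rangle = \iota_B^{-1}\langle g|f\rangle\iota_{A^\dagger}$. Taking daggers gives
\[ \langle g^{\dagger\dagger}|f^{\dagger\dagger}\rangle^\dagger = (\iota_{A^\dagger})^\dagger\,\langle g|f\rangle^\dagger\,(\iota_B^{-1})^\dagger = \iota_{A^{\dagger\dagger}}^{-1}\,\langle g|f\rangle^\dagger\,\iota_{B^\dagger}. \]
Substituting this into the unfolded form of (a), the outer involutors cancel. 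The left-hand side of (a) thereby becomes exactly $\iota_A\langle g|f\rangle^\dagger$, the left-hand side of (b). Hence, given (c), (a) holds if and only if (b) holds, and this proves (a)$\Rightarrow$(b) of the lemma whichever of the two is assumed.

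\emph{From (b) to (a).} Assume both {\bf [IP.2](a)} and {\bf [IP.2](b)}. Equating their left-hand sides and cancelling the isomorphism $\iota_A$ gives
\[ \iota_{A^{\dagger\dagger}}\,\langle g^{\dagger\dagger}|f^{\dagger\dagger}\rangle^\dagger\,\iota_{B^\dagger}^{-1} = \langle g|f\rangle^\dagger. \]
Using the same involutor identities, I rewrite $\iota_{A^{\dagger\dagger}} = ((\iota_{A^\dagger})^{-1})^\dagger$ and $\iota_{B^\dagger}^{-1} = (\iota_B)^\dagger$. By contravariant functoriality the left-hand side becomes the single dagger $\bigl(\iota_B\,\langle g^{\dagger\dagger}|f^{\dagger\dagger}\rangle\,\iota_{A^\dagger}^{-1}\bigr)^\dagger$. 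The dagger is part of an adjoint equivalence (remark above), so it is faithful, and we may strip it. This yields $\langle g^{\dagger\dagger}|f^{\dagger\dagger}\rangle = \iota_B^{-1}\langle g|f\rangle\iota_{A^\dagger}$, which is {\bf [IP.2](c)} with $f,g$ renamed.

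The main obstacle I expect is not conceptual but bookkeeping. One must check that every involutor has the right subscript: $A$ versus $A^\dagger$ versus $A^{\dagger\dagger}$, and naturality versus the triangle axiom. One must also settle the intended typing of {\bf [IP.2](b)}. If instead (b) is meant with $\langle f|g\rangle$ inside the dagger, I would first use {\bf [IP.1]} together with the appendix typing to put it in the symmetric form above, and then run the same argument.
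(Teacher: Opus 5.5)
Your argument is correct, and the involutor subscripts all check out. Below, (a), (b), (c) refer to the three \textbf{[IP.2]} identities.

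The forward direction is essentially the paper's. The paper unfolds (a) via \textbf{[dual-IP]}, substitutes (c), and cancels involutors using $\iota_{Y^\dagger}(\iota_Y)^\dagger = 1$. Your phrasing, that under (c) the two left-hand sides coincide, simply handles both of the paper's ``similarly'' cases at once.

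The converse is where you differ. The paper stays purely equational. It computes $\iota_A^{-1}\langle f|g\rangle\iota_{B^\dagger} = \langle f^\dagger|g^\dagger\rangle^\iota\iota_{B^\dagger} = \iota_{A^{\dagger\dagger}}\langle g^{\dagger\dagger}|f^{\dagger\dagger}\rangle^\dagger$ using (a) and \textbf{[dual-IP]}. It then applies (b) to the double-daggered maps $f^{\dagger\dagger}, g^{\dagger\dagger}$ and lands directly on $\langle f^{\dagger\dagger}|g^{\dagger\dagger}\rangle$, with no appeal to faithfulness.

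You instead use (a) and (b) only at the original pair. You reassemble the involutors into a single dagger and cancel it by faithfulness of $(\_)^\dagger$. That step is legitimate, and it needs only that $\iota$ is a natural isomorphism, since $u^{\dagger\dagger} = \iota^{-1}u\,\iota$.

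The paper's route gives a single chain of equalities, at the price of instantiating (b) at different maps. Yours gives a sharper pointwise statement: for fixed $f,g$, the left-hand sides $\iota_A\langle f^\dagger|g^\dagger\rangle^\iota$ and $\iota_A\langle g|f\rangle^\dagger$ agree if and only if (c) holds for the swapped pair. Hence any two of these three instances imply the third.

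Your closing hedge is unnecessary. The appendix's typed form $\iota_B\langle f|g\rangle^\dagger = \langle g|f\rangle$ is exactly your reading with $f$ and $g$ renamed, so \textbf{[IP.1]} plays no role.
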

For proof of the above Lemma and subsequent results in this section, see Appendix \ref{Sec: App inner product}.

\begin{lemma} \label{inner_identities} 
The following equations hold in an inner product category:
\begin{enumerate}[(i)]
\item If $h,k: B \to A$ and $f,g: A \to X$ then $\langle hf|kg \rangle^\iota = f^\dagger \langle h|k \rangle^\iota g$
\item $\langle g^\dagger|f^\dagger\rangle = (\langle f|g \rangle^\iota)^\dagger$
\item $\langle f|g \rangle^\dagger = \langle g^\dagger | f^\dagger \rangle^\iota$
\end{enumerate}
\end{lemma}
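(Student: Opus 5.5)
Throughout, composition is read diagrammatically, as in the paper. The plan is to prove all three identities by unfolding the definition {\bf [dual-IP]}. After unfolding, I would use only {\bf [IP.1]}, naturality of $\iota$, the triangle identity $\iota_{X^\dagger}(\iota_X)^\dagger = 1_{X^\dagger}$, and (for (iii)) the identity {\bf [IP.2](c)} supplied by Lemma \ref{Lem:double-dagger-inner-product}.

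For (i), with $h,k: B\to A$ and $f,g: A \to X$, the definition gives
\[ \langle hf|kg\rangle^\iota = \iota_{X^\dagger}\langle (kg)^\dagger | (hf)^\dagger\rangle^\dagger \iota_X^{-1}. \]
Functoriality gives $(kg)^\dagger = g^\dagger k^\dagger$ and $(hf)^\dagger = f^\dagger h^\dagger$. Then {\bf [IP.1]} yields $\langle g^\dagger k^\dagger | f^\dagger h^\dagger\rangle = g^\dagger \langle k^\dagger|h^\dagger\rangle f^{\dagger\dagger}$. Applying the contravariant dagger reverses this to $f^{\dagger\dagger\dagger}\langle k^\dagger|h^\dagger\rangle^\dagger g^{\dagger\dagger}$. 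Naturality of $\iota$ gives $\iota_{X^\dagger} f^{\dagger\dagger\dagger} = f^\dagger \iota_{A^\dagger}$ and $g^{\dagger\dagger}\iota_X^{-1} = \iota_A^{-1} g$. Substituting these leaves $f^\dagger \langle h|k\rangle^\iota g$, again by {\bf [dual-IP]}.

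For (ii), with $f: X\to A$ and $g: X \to B$, I would apply the dagger to the definition:
\[ (\langle f|g\rangle^\iota)^\dagger = (\iota_B^{-1})^\dagger\, \langle g^\dagger|f^\dagger\rangle^{\dagger\dagger}\, (\iota_{A^\dagger})^\dagger . \]
Naturality of $\iota$ rewrites the middle term as $\iota_{B^\dagger}^{-1}\langle g^\dagger|f^\dagger\rangle \iota_{A^{\dagger\dagger}}$. The triangle identity gives $(\iota_B)^\dagger = \iota_{B^\dagger}^{-1}$, hence $(\iota_B^{-1})^\dagger = \iota_{B^\dagger}$. Applied at $A^\dagger$, it also gives $\iota_{A^{\dagger\dagger}}(\iota_{A^\dagger})^\dagger = 1$. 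So the $\iota$'s cancel on both sides and we are left with $\langle g^\dagger|f^\dagger\rangle$.

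For (iii), with $f: A\to X$ and $g: B\to X$, I would unfold the right-hand side:
\[ \langle g^\dagger|f^\dagger\rangle^\iota = \iota_{B^\dagger}\langle f^{\dagger\dagger}|g^{\dagger\dagger}\rangle^\dagger\iota_{A^\dagger}^{-1}. \]
Lemma \ref{Lem:double-dagger-inner-product} gives {\bf [IP.2](c)} in an inner product category, so $\langle f^{\dagger\dagger}|g^{\dagger\dagger}\rangle = \iota_A^{-1}\langle f|g\rangle\iota_{B^\dagger}$. Taking the dagger gives $(\iota_{B^\dagger})^\dagger\langle f|g\rangle^\dagger(\iota_A^{-1})^\dagger$. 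By the triangle identities, $\iota_{B^\dagger}(\iota_{B^\dagger})^\dagger = 1$ and $(\iota_A^{-1})^\dagger = \iota_{A^\dagger}$, so everything cancels to $\langle f|g\rangle^\dagger$.

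None of these steps is conceptually hard. The main obstacle is bookkeeping: tracking exactly which object each $\iota$ is indexed by (for example $\iota_{A^\dagger}$ versus $(\iota_A)^\dagger$), and checking that each use of the triangle identity is at the right object. I would write out the typing of each intermediate map, as in Definition \ref{Defn: App inner product}, to guard against index slips.
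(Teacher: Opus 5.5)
Your proof is correct. Part (i) is exactly the paper's computation, but (ii) and (iii) go by different routes.

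For (ii), the paper does not dagger the definition. With $f: X \to A$, $g: X \to B$, it notes that $\iota_{A^\dagger}\langle g^\dagger|f^\dagger\rangle^\dagger = \langle f|g\rangle^\iota\,\iota_B = \iota_{A^\dagger}(\langle f|g\rangle^\iota)^{\dagger\dagger}$, using only the definition and naturality of $\iota$. It then cancels $\iota_{A^\dagger}$ and uses that $(\_)^\dagger$ is faithful, which holds because $(\_)^{\dagger\dagger}$ is naturally isomorphic to the identity. Your version is a purely forward calculation, but it spends the triangle identity $\iota_{X^\dagger}(\iota_X)^\dagger = 1$. The paper's Remark notes that most of the theory does not need that identity.

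For (iii), the paper's argument is one line. {\bf [IP.2](a)} with $f,g$ swapped, together with {\bf [IP.2](b)}, gives $\iota_B\langle g^\dagger|f^\dagger\rangle^\iota = \langle g|f\rangle = \iota_B\langle f|g\rangle^\dagger$, and one cancels $\iota_B$. Your route through {\bf [IP.2](c)} is valid and not circular: Lemma \ref{Lem:double-dagger-inner-product} derives (c) from (a), (b) and {\bf [dual-IP]} without using the present lemma. It is roundabout, though, since (c) was itself obtained from (a) and (b). What your route does show is that, given the triangle identity, (iii) is just a restatement of {\bf [IP.2](c)}.

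There is one index slip in (iii). Since $\langle g^\dagger|f^\dagger\rangle^\iota$ has domain $B^{\dagger\dagger}$, unfolding {\bf [dual-IP]} gives a leading $\iota_{B^{\dagger\dagger}}$, not $\iota_{B^\dagger}$. The cancellation you want is then the triangle identity at $B^\dagger$, namely $\iota_{B^{\dagger\dagger}}(\iota_{B^\dagger})^\dagger = 1$. The composite $\iota_{B^\dagger}(\iota_{B^\dagger})^\dagger$ you wrote does not typecheck.
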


\begin{definition}
\label{Defn: inner-product-functor}
    An involutive functor $(F, \gamma): \mathbb{X} \to \mathbb{Y}$ between inner product categories {\bf preserves} the inner product if for any $f, g: A \to X \in \mathbb{X}$, $\langle F(f)| F(g) \rangle = F (\langle f|g \rangle) \gamma_A$.
\end{definition}


Given any unitary category, for any pair of maps $f: A \to X$ and $g: B \to X$ we may define an {\bf inner product} $\langle f | g \rangle := f \varphi_X g^\dagger: A \to B^\dagger$.

\begin{lemma} \label{inner-to-unitary} 
    Every unitary category is an inner product category with the inner product combinator defined as follows. 
    Given $f: A \to X, g: B \to X$, their inner product is defined  to be: 
    \[\langle f|g \rangle_X := f \varphi_X g^\dagger \]
\end{lemma}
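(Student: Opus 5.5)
The plan is to verify the axioms {\bf [IP.1]}--{\bf [IP.4]} directly for $\langle f|g\rangle := f\varphi_X g^\dagger$. Composition is in diagrammatic order throughout, and I will use three facts freely: contravariant functoriality $(kg)^\dagger = g^\dagger k^\dagger$, naturality of $\iota$ (in the form $\iota\, f^{\dagger\dagger} = f\,\iota$), and the identities of Lemma \ref{unitary-identities}.

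\textbf{[IP.1].} This is immediate: $\langle hf|kg\rangle = hf\varphi_X g^\dagger k^\dagger = h\langle f|g\rangle k^\dagger$.

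\textbf{A closed form for the dual inner product.} Before treating the remaining axioms, I will derive an explicit formula for $\langle h|k\rangle^\iota$ when $h: X\to A$ and $k: X\to B$. By {\bf [dual-IP]},
\[ \langle h|k\rangle^\iota = \iota_{A^\dagger}\,\langle k^\dagger|h^\dagger\rangle^\dagger\,\iota_B^{-1}, \qquad \langle k^\dagger|h^\dagger\rangle^\dagger = h^{\dagger\dagger\dagger}\,\varphi_{X^\dagger}^\dagger\,k^{\dagger\dagger}. \]
Naturality of $\iota$ moves the outer involutors inward, giving
\[ \langle h|k\rangle^\iota = h^\dagger\,\iota_{X^\dagger}\,\varphi_{X^\dagger}^\dagger\,\iota_X^{-1}\,k. \]
Lemma \ref{unitary-identities}(iv) then collapses the middle factor to $\varphi_X^{-1}$. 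Hence
\[ \langle h|k\rangle^\iota = h^\dagger\varphi_X^{-1}k. \]
This formula is the crux of the argument; everything after it is a one-line calculation.

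\textbf{[IP.3] and [IP.4].} These follow by cancellation. For {\bf [IP.3]}, if $hg=1$ then
\[ \langle f|g\rangle\langle h|k\rangle^\iota = f\varphi_X g^\dagger h^\dagger\varphi_X^{-1}k = f\varphi_X(hg)^\dagger\varphi_X^{-1}k = fk. \]
For {\bf [IP.4]}, if $gh=1$ then
\[ \langle f|g\rangle^\iota\langle h|k\rangle = f^\dagger\varphi_X^{-1}gh\varphi_X k^\dagger = f^\dagger k^\dagger = (kf)^\dagger. \]

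\textbf{[IP.2](b).} The composite $\iota\langle f|g\rangle^\dagger$ has type $B\to A^\dagger$, so it is compared with $\langle g|f\rangle$. Expanding and using naturality,
\[ \iota\langle f|g\rangle^\dagger = \iota\,g^{\dagger\dagger}\varphi_X^\dagger f^\dagger = g\,\iota\,\varphi_X^\dagger f^\dagger, \]
and Lemma \ref{unitary-identities}(ii) rewrites $\iota\varphi_X^\dagger$ as $\varphi_X$, giving $g\varphi_X f^\dagger$.

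\textbf{[IP.2](a).} Apply the closed form with $f^\dagger, g^\dagger : X^\dagger \to A^\dagger, B^\dagger$ in place of $h,k$, so that $\langle f^\dagger|g^\dagger\rangle^\iota = f^{\dagger\dagger}\varphi_{X^\dagger}^{-1}g^\dagger$. Then naturality gives
\[ \iota\langle f^\dagger|g^\dagger\rangle^\iota = f\,\iota_X\,\varphi_{X^\dagger}^{-1}g^\dagger. \]
Finally, {\bf [U.2]} gives $\iota_X\varphi_{X^\dagger}^{-1} = \varphi_X$, which recovers $f\varphi_X g^\dagger$.

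\textbf{Main obstacle.} The step I expect to be delicate is the closed form for $\langle h|k\rangle^\iota$. It requires the right instance of Lemma \ref{unitary-identities}(i)/(iv) at the object $X^\dagger$ versus $X$, and careful bookkeeping of the subscripts on $\iota$ ($\iota_{A^\dagger}$ against $\iota_{X^\dagger}$) when commuting them past $h^{\dagger\dagger\dagger}$ and $k^{\dagger\dagger}$. If (iv) does not apply in exactly this form, I would fall back on (i) at $X^\dagger$, namely $\varphi_{X^\dagger}^\dagger = \varphi_{X^{\dagger\dagger}}^{-1}$, combined with {\bf [U.2]} at $X$ and $X^\dagger$.
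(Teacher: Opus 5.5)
Your proposal is correct and takes essentially the same route as the paper: you derive the closed form $\langle h|k\rangle^\iota = h^\dagger\varphi_X^{-1}k$ from \textbf{[dual-IP]} using naturality of $\iota$ and Lemma~\ref{unitary-identities}(iv), then verify \textbf{[IP.2]}--\textbf{[IP.4]} by one-line cancellations using Lemma~\ref{unitary-identities}(ii) and \textbf{[U.2]}. Your $\iota$-subscript bookkeeping is right, and you correctly read \textbf{[IP.2]}(b) as $\iota\langle f|g\rangle^\dagger = \langle g|f\rangle$, as in the appendix statement of the axiom.
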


\begin{proof}  We begin computing the dual inner product. Given $h: X \to A$ and $k: X \to B$,
\begin{eqnarray*}
    \langle h|k \rangle^\iota & := &  \iota_{A^\dagger} \langle k^\dagger|f^\dagger \rangle^\dagger \iota^{-1}_B = \iota_{A^\dagger} (k^\dagger \varphi_{X^\dagger} h^{\dagger\dagger})^\dagger \iota^{-1}_B \\
    & = & \iota_{X^\dagger} h^{\dagger\dagger\dagger} (\varphi_{X^\dagger})^\dagger k^{\dagger\dagger} \iota^{-1}_B = f^\dagger \iota_{X^\dagger} (\varphi_{X^\dagger})^\dagger \iota^{-1}_X k \\
    & = & h^\dagger \varphi_X^{-1} k
\end{eqnarray*}
where we use in the last line (see also Lemma \ref{unitary-identities}):
$\iota (\varphi_{X^\dagger})^\dagger \iota^{-1}= \iota (\varphi_X^{-1})^{\dagger\dagger} \iota^{-1} = \varphi_X^{-1}$.

We now prove the identities: Given $f: A \to X$ and $B \to X$, {\bf [{IP}.2]} is immediate. For,
\begin{enumerate}[{\bf [{IP}.2]}]
\item (a) $\iota \langle f^\dagger | g^\dagger \rangle^\iota 
= \iota f^{\dagger\dagger} \varphi_{X^\dagger}^{-1} g^\dagger = f \iota \varphi_{X^\dagger}^{-1} g^\dagger = f \varphi_X g^\dagger = \langle f|g \rangle$ \\
(b)~ $\iota \langle f|g \rangle^\dagger = \iota (f \varphi_X g^\dagger )^\dagger = \iota g^{\dagger\dagger} \varphi_X^\dagger f^\dagger = g \iota \varphi_X^\dagger f^\dagger = g \varphi_X f^\dagger = \langle g|f \rangle$
\item If $hg = 1$ then, 
$ \langle f|g \rangle \langle h| k \rangle^\iota = f \varphi_X g^\dagger h^\dagger \varphi_X^{-1} k
       = f \iota_X (hg)^\dagger \iota_X^{-1} k = fk $
\item If $gh=1$ then, $ \langle f|g \rangle^\iota \langle h| k \rangle 
       = f^\dagger \varphi_X^{-1} g h \varphi_X k^\dagger 
       = f^\dagger  k^\dagger = (kf)^\dagger $ \qedhere
\end{enumerate}
\end{proof}


\begin{proposition} \label{inner_2_unitary}
   An inner product category is a unitary category where the unitary structure is given by: $ \varphi_A := \langle 1_A|1_A \rangle: A \to A^\dagger.$
\end{proposition}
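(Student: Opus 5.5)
The plan is to check directly that $\varphi_A := \langle 1_A|1_A\rangle : A \to A^\dagger$ is an isomorphism and then verify {\bf [U.1]} and {\bf [U.2]}. Every step specializes an axiom or an identity of Lemma \ref{inner_identities} to identity maps. Throughout I will use functoriality of the dagger in the form $1_A^\dagger = 1_{A^\dagger}$.

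\emph{Invertibility.} I claim that $\varphi_A^{-1} = \langle 1_A|1_A\rangle^\iota : A^\dagger \to A$. Take $f=g=h=k=1_A$ in {\bf [IP.3]}; the hypothesis $hg=1$ holds trivially, so $\langle 1|1\rangle\langle 1|1\rangle^\iota = 1_A$. The same choice in {\bf [IP.4]} gives $\langle 1|1\rangle^\iota\langle 1|1\rangle = (1_A)^\dagger = 1_{A^\dagger}$. Hence $\varphi_A$ is an isomorphism with the stated inverse.

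\emph{Axiom {\bf [U.2]}.} By definition $\varphi_{A^\dagger} = \langle 1_{A^\dagger}|1_{A^\dagger}\rangle$. By the previous step applied to the object $A^\dagger$, we get $\varphi_{A^\dagger}^{-1} = \langle 1_{A^\dagger}|1_{A^\dagger}\rangle^\iota = \langle 1_A^\dagger|1_A^\dagger\rangle^\iota : A^{\dagger\dagger}\to A^\dagger$. Now {\bf [IP.2](a)} with $f=g=1_A$ reads $\iota_A\langle 1_A^\dagger|1_A^\dagger\rangle^\iota = \langle 1_A|1_A\rangle$. In other words $\iota_A\varphi_{A^\dagger}^{-1} = \varphi_A$, which rearranges to $\varphi_A\varphi_{A^\dagger} = \iota_A$.

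\emph{Axiom {\bf [U.1]}.} Apply Lemma \ref{inner_identities}(ii) with $f=g=1_A$. This gives $\langle 1_A^\dagger|1_A^\dagger\rangle = (\langle 1_A|1_A\rangle^\iota)^\dagger$, that is, $\varphi_{A^\dagger} = (\varphi_A^{-1})^\dagger$.

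The main obstacle I expect is bookkeeping rather than substance. One must confirm that the typing of the dual inner product makes $\langle 1_{A^\dagger}|1_{A^\dagger}\rangle^\iota$ land in $A^{\dagger\dagger}\to A^\dagger$, and hence that it is genuinely the inverse of $\varphi_{A^\dagger}$ rather than of some other map. One must also check that the instances of {\bf [IP.2](a)} and Lemma \ref{inner_identities}(ii) have exactly the domains and codomains required. Once these types are fixed, each axiom is a one-line specialization.
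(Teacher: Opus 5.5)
Your proof is correct and is essentially the paper's own argument. You get invertibility of $\varphi_A$, with inverse $\langle 1_A|1_A\rangle^\iota$, from {\bf [IP.3]} and {\bf [IP.4]} at identity maps; {\bf [U.1]} from Lemma \ref{inner_identities}(ii) with $f=g=1_A$; and {\bf [U.2]} from {\bf [IP.2]}(a) read as $\varphi_A=\iota_A\varphi_{A^\dagger}^{-1}$, where you identify $\varphi_{A^\dagger}^{-1}$ by applying the invertibility step at $A^\dagger$, exactly as the paper does implicitly.
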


\begin{proof} 
We show that $\varphi_A := \langle 1_A|1_A \rangle: A \to A^\dagger$ is the unitary structure map, that is {\bf [U.1]} and {\bf [U.2]} hold. 

First $\varphi_A: A \to A^\dagger$ is an isomorphism: its inverse is $\varphi_A^{-1} := \langle 1_A|1_A \rangle^\iota$. By {\bf [IP.3]} we have that $\varphi_A^{-1} \varphi = 1_A$, and by {\bf [IP.4]} we have that $\varphi_A \varphi_A^{-1} = 1_{A^\dagger}$.
\begin{description}
    \item[{\bf [U.1]}] $\varphi_{A^\dagger} = (\varphi_A^{-1})^\dagger$
\[  \varphi_{A^\dagger} :=  \langle 1_{A^\dagger} | 1_{A^\dagger} \rangle  =  \langle 1_{A}^\dagger | 1_{A}^\dagger \rangle           \stackrel{Lem. \ref{inner_identities}(ii)}{=} \langle 1_A | 1_A \rangle^{\iota^\dagger} =: (\varphi_A^{-1})^\dagger \]
    
 \item[{\bf [U.2]}] $\varphi_A \varphi_{A^\dagger} = \iota$
        \[ \varphi_A := \langle 1_A|1_A\rangle \stackrel{\mbox{{\bf [IP.2]}(a)}}{=} \iota \langle 1_A^\dagger| 1_A^\dagger \rangle^\iota = \iota \langle 1_{A^\dagger}| 1_{A^\dagger} \rangle^\iota
=: \iota \varphi_{A^\dagger}^{-1} \] 
\end{description} 

Finally, we note that, almost immediately, the unitary structure induced by an inner product structure induces the original inner product structure and, conversely, the inner product structure of unitary structure induces the original unitary structure; thus, the structures are in bijective correspondence. 
\end{proof}

Notice Proposition \ref{inner_2_unitary} does not use {\bf [IP.2]}(b).  This means that demanding the identity is redundant. This means that one should be able to prove {\bf [IP.2]}(b) from {\bf [IP.2]}(a) as shown in the below Lemma. 

The proof uses {\bf [IP.1]} and {\bf [IP.4]} to reduce the problem to an identity of unitary structure maps and then uses the fact that the unitary structure maps are isomorphisms. 

\begin{theorem}
    A unitary category is precisely the same as an inner product category. 
\end{theorem}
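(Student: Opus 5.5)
The plan is to assemble the theorem from the two constructions already in hand, Lemma \ref{inner-to-unitary} and Proposition \ref{inner_2_unitary}, and then check that they are mutually inverse. By Lemma \ref{inner-to-unitary}, a unitary structure $\varphi$ yields an inner product $\langle f|g\rangle := f\varphi_X g^\dagger$ satisfying {\bf [IP.1]}--{\bf [IP.4]}. Axiom {\bf [IP.1]} is immediate from functoriality of $(\_)^\dagger$, since $hf\varphi_X g^\dagger k^\dagger = h\langle f|g\rangle k^\dagger$. By Proposition \ref{inner_2_unitary}, an inner product yields a unitary structure $\varphi_A := \langle 1_A|1_A\rangle$. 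So the content of the theorem is that these two passages are inverse bijections on the structure carried by a fixed involutive category $(\X,(\_)^\dagger,\iota)$.

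\emph{Unitary $\to$ inner product $\to$ unitary.} Starting from $\varphi$, the induced unitary structure is
\[ \langle 1_A|1_A\rangle = 1_A\varphi_A 1_A^\dagger = \varphi_A, \]
so this round trip is trivially the identity.

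\emph{Inner product $\to$ unitary $\to$ inner product.} This is the direction that needs an actual argument. Starting from $\langle -|-\rangle$ with $\varphi_X := \langle 1_X|1_X\rangle$, I must show $\langle f|g\rangle = f\langle 1_X|1_X\rangle g^\dagger$ for all $f: A\to X$ and $g: B\to X$. Applying {\bf [IP.1]} with $h = f$, $k = g$ and inner arguments $1_X, 1_X$ gives
\[ \langle f\cdot 1_X \,|\, g\cdot 1_X\rangle = f\langle 1_X|1_X\rangle g^\dagger, \]
which is exactly the claim. So the inner product is determined by its values on identities. I also want to confirm that the dual inner products agree, i.e.\ that $\langle h|k\rangle^\iota = h^\dagger\varphi_X^{-1}k$. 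This holds because the dual inner product is defined from the inner product via {\bf [dual-IP]}, and the computation in the proof of Lemma \ref{inner-to-unitary} then applies verbatim.

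\emph{Extending to functors.} To make the statement meaningful at the level of structure-preserving maps, I would also check that an involutive functor is unitary iff it preserves the induced inner product. If $(F,\gamma)$ preserves the inner product, then taking $f = g = 1$ gives
\[ \varphi_{F(X)} = F(\varphi_X)\gamma_X, \]
which is the unitarity square. Conversely, if $F$ is unitary, then
\[ \langle Ff|Fg\rangle = F(f)\,F(\varphi_X)\,\gamma_X\, F(g)^\dagger, \]
and naturality of $\gamma$ turns $\gamma_X F(g)^\dagger$ into $F(g^\dagger)\gamma_B$. The whole expression is then $F(\langle f|g\rangle)\gamma_B$. The 2-cells are the same achiral transformations on both sides.

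\emph{Main obstacle.} I expect the one delicate point to be the redundancy of {\bf [IP.2]}(b), together with getting the typing of $\iota$ right in {\bf [IP.2]}(a). Proposition \ref{inner_2_unitary} uses only (a), whereas Lemma \ref{inner-to-unitary} produces both (a) and (b). For the bijection to be between the same classes, I would show that (b) follows from (a), in the spirit of Lemma \ref{Lem:double-dagger-inner-product}. Using {\bf [IP.1]}, I reduce $\iota\langle f|g\rangle^\dagger$ to $g\,\iota\varphi_X^\dagger f^\dagger$. The required identity then becomes $\iota\varphi_X^\dagger = \varphi_X$, which is Lemma \ref{unitary-identities}(ii) applied to the unitary structure given by Proposition \ref{inner_2_unitary}.
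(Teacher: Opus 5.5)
Your proposal is correct and follows the paper's route. The theorem is assembled from Lemma \ref{inner-to-unitary} and Proposition \ref{inner_2_unitary}, and your two round-trip checks are exactly the ``almost immediate'' bijectivity remark that closes the proof of Proposition \ref{inner_2_unitary}: one way, $\langle 1_A|1_A\rangle = \varphi_A$; the other way, \textbf{[IP.1]} gives $\langle f|g\rangle = f\langle 1_X|1_X\rangle g^\dagger$.

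Your functor-level check (matching Lemma \ref{Lemma 6.6}) is sound. So is your derivation of \textbf{[IP.2]}(b) from (a) via $\iota_X\varphi_X^\dagger = \varphi_X$, though that step is not needed here, because the definition of an inner product category already includes (b).
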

\begin{proof}
    Follows immediately from Propositions \ref{inner-to-unitary} and \ref{inner_2_unitary}. 
\end{proof}

Thus, an inner product can be used to replace unitary structure and vice versa. Furthermore, as we now note, the equivalence of inner product and unitary structure also works at the level of functors:

\begin{lemma} \label{Lemma 6.6}
    Let $\mathbb{X}$ and $\mathbb{Y}$ be inner product categories. Then $(F, \gamma): \mathbb{X} \to \mathbb{Y}$, an involutive functor is inner product preserving if and only if it is a unitary functor.
\end{lemma}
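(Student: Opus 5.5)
We use the correspondence established in Lemma \ref{inner-to-unitary} and Proposition \ref{inner_2_unitary}. Regard $\mathbb{X}$ and $\mathbb{Y}$ as unitary categories with $\varphi_A := \langle 1_A|1_A\rangle$. By the bijective correspondence noted at the end of the proof of Proposition \ref{inner_2_unitary}, the inner product is then recovered as $\langle f|g\rangle = f\varphi_X g^\dagger$, and similarly in $\mathbb{Y}$. For $f,g: A \to X$ the preservation condition of Definition \ref{Defn: inner-product-functor} becomes
\[ F(f)\,\varphi_{F(X)}\,F(g)^\ddagger = F(f)\,F(\varphi_X)\,F(g^\dagger)\,\gamma_A . \]
Both directions reduce to comparing these two sides using naturality of the preservator $\gamma$.

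\emph{Inner product preserving $\Rightarrow$ unitary.} Specialise to $f=g=1_A$. The left side is $\langle F(1)|F(1)\rangle = \varphi_{F(A)}$. The right side is $F(\langle 1|1\rangle)\gamma_A = F(\varphi_A)\gamma_A$, since $1^\dagger = 1$ and $F$ preserves identities. This is exactly the unitary functor triangle $F(\varphi_A)\gamma_A = \varphi_{F(A)}$.

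\emph{Unitary $\Rightarrow$ inner product preserving.} Naturality of $\gamma$, as a transformation between the contravariant functors $F((\_)^\dagger)$ and $F(\_)^\ddagger$, gives $F(g^\dagger)\gamma_A = \gamma_X F(g)^\ddagger$ for $g: A\to X$. Hence
\[ F(\langle f|g\rangle)\gamma_A = F(f)F(\varphi_X)F(g^\dagger)\gamma_A = F(f)F(\varphi_X)\gamma_X F(g)^\ddagger = F(f)\varphi_{F(X)}F(g)^\ddagger = \langle F(f)|F(g)\rangle , \]
where the third equality is the unitary condition. The definition only requires $f,g$ with common domain $A$, but the same computation works verbatim for $f: A\to X$, $g: B\to X$ with $\gamma_B$ on the right.

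The main point requiring care is orienting the naturality square of $\gamma$ correctly. The dagger reverses arrows, so for $g: A\to X$ we must check that $F(g^\dagger): F(X^\dagger)\to F(A^\dagger)$ and $F(g)^\ddagger: F(X)^\ddagger \to F(A)^\ddagger$ are composed with $\gamma_X$ and $\gamma_A$ on the correct sides. The diagrammatic-order typing $F(X^\dagger)\xrightarrow{\gamma_X} F(X)^\ddagger \xrightarrow{F(g)^\ddagger} F(A)^\ddagger$ versus $F(X^\dagger)\xrightarrow{F(g^\dagger)}F(A^\dagger)\xrightarrow{\gamma_A}F(A)^\ddagger$ confirms the identity used. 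Beyond this, the argument relies only on the identification of the inner product with $f\varphi g^\dagger$ via Proposition \ref{inner_2_unitary}.
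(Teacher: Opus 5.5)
Your proof is correct and matches the paper's argument. One direction specialises preservation to $f=g=1_A$. The other expands $\langle f|g\rangle = f\langle 1_X|1_X\rangle g^\dagger$ (which is just \textbf{[IP.1]}, the content of the correspondence you cite), moves $F(g^\dagger)$ past $\gamma$ by naturality, and applies the unitary triangle $F(\varphi_X)\gamma_X = \varphi_{F(X)}$; your remark about the non-parallel case with $\gamma_B$ is a harmless extension.
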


Global inner products are not a new idea: Vicary \cite{Vicary}, for example, defines the inner product of two parallel maps $f,g: A \to X$ in a dagger category as $\langle f | g \rangle = f g^\dagger$. Note that our definition of inner product reduces to Vicary's for $\dagger$-categories and parallel maps.  However, our definition does encompass non-parallel maps as well. Thus, every dagger category is certainly an example of an inner product category. 

Viewing a groupoid as a $\dagger$-category, the inner product of two morphisms $x: X \to Z, y: Y \to Z$ will be $xy^{-1}$. In the category of relations the inner product of $R: X \to Z, S: Y \to Z$ will be $RS^\dagger: X \to Y$, where $S^\dagger$ is the transpose relation.

\subsection{Inner adjoints}

At this stage, we know that a unitary category can equivalently be defined as a category with an inner product.  However, a category with an inner product supports the notion of an ``inner adjoint''.  
\begin{definition}
    In an inner product category, given $f:A \to B$ the (inner) {\bf adjoint} of $f$ is a map $f^*: B \to A$ such that 
    \[ \langle x f | y \rangle = \langle x | y f^* \rangle\]
\end{definition}

Inner adjoint provide the generalization of ``adjoint" in the sense of Hilbert spaces.  They also provide a clean way of showing that a unitary category is equivalent to a $\dagger$-category.

\begin{lemma}
\label{Lem:Adjoint-dagger}
In an inner product category, the adjoint of a map $f: A \to B$ exists, is unique, and is given by 
\begin{equation}
\label{eqn: adjoint}
    f^*:= \varphi_B f^\dagger \varphi_A^{-1}
\end{equation} 
which satisfies:
\[ 
(i)~~\langle x f | y \rangle = \langle x | y f^* \rangle \quad \quad \quad  (ii)~~ \langle x f^* | y \rangle = \langle x | y f \rangle \quad \quad  (iii)~~ f = (f^*)^*
\quad \quad  
  (iv)~~ 1_A^* =1_A \text{ and }(fg)^* = g^*f^*     \]
\end{lemma}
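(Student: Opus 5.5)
The plan is to work entirely in the unitary presentation. By the Theorem identifying unitary and inner product categories (Propositions \ref{inner-to-unitary} and \ref{inner_2_unitary}), I may take $\varphi_A = \langle 1_A|1_A\rangle$ and assume $\langle f|g\rangle = f\varphi_X g^\dagger$. The identities of Lemma \ref{unitary-identities} together with {\bf [U.1]} and {\bf [U.2]} are then available. Composition is diagrammatic throughout, and $f^* := \varphi_B f^\dagger \varphi_A^{-1}: B \to B^\dagger \to A^\dagger \to A$.

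\emph{Existence, i.e.\ (i).} Fix $x: C \to A$ and $y: D \to B$. Then $\langle xf|y\rangle = x f \varphi_B y^\dagger$ and $\langle x| y f^*\rangle = x \varphi_A (f^*)^\dagger y^\dagger$, so it suffices to show $\varphi_A (f^*)^\dagger = f\varphi_B$.
\begin{itemize}
\item Expand $(f^*)^\dagger = (\varphi_A^{-1})^\dagger f^{\dagger\dagger}\varphi_B^\dagger$ and rewrite $(\varphi_A^{-1})^\dagger = \varphi_{A^\dagger}$ by {\bf [U.1]}.
\item By {\bf [U.2]} this gives $\varphi_A(f^*)^\dagger = \iota_A f^{\dagger\dagger}\varphi_B^\dagger$.
\item Naturality of $\iota$ turns this into $f\,\iota_B\varphi_B^\dagger$.
\item Lemma \ref{unitary-identities}(ii) gives $\iota_B\varphi_B^\dagger = \varphi_B$, so the expression equals $f\varphi_B$.
\end{itemize}

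\emph{Uniqueness.} Suppose $g$ is any map satisfying (i). Specialise (i) to $x = 1_A$ and $y = 1_B$ to get $f\varphi_B = \varphi_A g^\dagger$. Hence $g^\dagger = \varphi_A^{-1} f \varphi_B$ is determined. Since $(\_)^\dagger$ is part of an adjoint equivalence, it is faithful, so $g$ is determined as well.

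\emph{Involutivity, i.e.\ (iii).} Compute $(f^*)^* = \varphi_A (\varphi_B f^\dagger\varphi_A^{-1})^\dagger \varphi_B^{-1} = \varphi_A\varphi_{A^\dagger} f^{\dagger\dagger}\varphi_B^\dagger\varphi_B^{-1}$, again using {\bf [U.1]}. The first two factors give $\iota_A$ by {\bf [U.2]}. For the tail, Lemma \ref{unitary-identities}(i) gives $\varphi_B^\dagger = \varphi_{B^\dagger}^{-1}$, and inverting {\bf [U.2]} gives $\varphi_{B^\dagger}^{-1}\varphi_B^{-1} = \iota_B^{-1}$. So $(f^*)^* = \iota_A f^{\dagger\dagger}\iota_B^{-1} = f$ by naturality.

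\emph{Statements (ii) and (iv).} Statement (ii) is then (i) applied to $f^*$ in place of $f$, combined with (iii). For (iv), $1_A^* = \varphi_A\varphi_A^{-1} = 1_A$, and $(fg)^* = \varphi_C g^\dagger f^\dagger\varphi_A^{-1} = (\varphi_C g^\dagger\varphi_B^{-1})(\varphi_B f^\dagger\varphi_A^{-1}) = g^* f^*$. Alternatively, (iv) follows from uniqueness by checking that $g^*f^*$ satisfies (i) for $fg$.

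I expect the main obstacle to be bookkeeping rather than any conceptual step. The delicate point is the tail $\varphi_B^\dagger\varphi_B^{-1}$ in (iii), where one must pick the correct unitary identity so that exactly $\iota_B^{-1}$ appears. The other point needing care is the faithfulness of the dagger used for uniqueness.
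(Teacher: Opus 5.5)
Your proof is correct and takes essentially the same route as the paper: everything reduces via \textbf{[IP.1]} to the case $x = 1_A$, $y = 1_B$, i.e.\ to $f\varphi_B = \varphi_A (f^*)^\dagger$, which is handled with \textbf{[U.1]}, \textbf{[U.2]}, Lemma~\ref{unitary-identities} and naturality of $\iota$, and your computation of $(f^*)^*$ is the paper's. The only differences are cosmetic: you verify existence explicitly (the paper says ``by definition'', relying on the reversibility of its derivation), you use faithfulness of $(\_)^\dagger$ where the paper conjugates by $\iota$, and you compute $(fg)^*$ directly rather than through uniqueness.
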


\begin{proof}
Let $f: A \to B$, setting $x=1_A$ and $y=1_B$ gives: 
\[ \infer={
    \infer={
     \infer={
      \infer={f^* = \varphi_B f^\dagger \varphi_A^{-1}
             }{\iota \varphi_B^\dagger f^\dagger (\iota \varphi_A^\dagger)^{-1} 
                   = f^*}
           }{\iota (\varphi_A^{-1} f \varphi_B)^\dagger \iota^{-1} 
                   = \iota (f^*)^{\dagger\dagger} \iota^{-1}}
           }{\varphi_A^{-1} f \varphi_B = (f^*)^\dagger}
        }{\infer{f \varphi_B = \varphi_A (f^*)^\dagger}{\langle f|1 \rangle = \langle 1| f^* \rangle}}
 \]
 This shows that $f$  determines $f^*$ uniquely.  Similarly, $f^*$ determines $f$ as 
 \[ f  = \varphi_A (f^*)^\dagger \varphi_B^{-1} \]

 We may now use these for establishing the Lemma:
 \begin{enumerate}[{\em (i)}]
\item This is by definition.

\item  This means, when $f: A \to B$, that we have both $\langle x | y f^\ddagger \rangle = \langle x f|y \rangle$ and $\langle x f^*| y \rangle = \langle x |y f\rangle$ where we define $f^* := \varphi_B f^\dagger \varphi_A^{-1}: B \to A$.  

\item This, in particular, does means $f^{**} = f$. We may also calculate this out directly:
\begin{eqnarray*}
    f^{**} & := & \varphi_A (f^*)^\dagger \varphi_B^{-1} 
     :=  \varphi_A (\varphi_B f^\dagger \varphi_A^{-1})^\dagger \varphi_B^{-1}  =  \varphi_A (\varphi_A^{-1})^\dagger f^{\dagger\dagger} \varphi_B^\dagger \varphi_B^{-1} = \iota f^{\dagger\dagger} \iota^{-1} = f.
\end{eqnarray*}
\item
Now this $(\_)^*$ is clearly a contravariant functor as it is easily seen that it preserves identities and for composition we have:
\[ \langle x|y(fg)^* \rangle = \langle xfg | y \rangle 
      = \langle xf | y g^* \rangle = \langle x | y g^* f^* \rangle  \text{ and so } (fg)^* = g^* f^* \qedhere \]
\end{enumerate}
\end{proof}
  
  Thus, taking the adjoint of a map is 
  a contravariant, stationary-on-objects functor with
  $f^{**} = f$; with this structure, we have turned our inner product category $(\mathbb{X},(\_)^\dagger)$ into a dagger category 
  $(\mathbb{X},(\_)^*)$: we refer to this latter structure as the adjoint involution.

  \begin{proposition}\label{induced dagger category}
      For every inner product category $\mathbb{X}$, the adjoint-involution $(\mathbb{X},(\_)^*)$ gives a dagger category. 
  \end{proposition}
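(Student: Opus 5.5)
The plan is to check the dagger-category axioms for $(\mathbb{X},(\_)^*)$ directly, using Lemma \ref{Lem:Adjoint-dagger} and Proposition \ref{inner_2_unitary}. A $\dagger$-category (a strict involutive category) needs three things: a contravariant endofunctor that is the identity on objects, an involutor equal to the identity, and the triangle coherence.

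First, for $f: A \to B$, set $f^* := \varphi_B f^\dagger \varphi_A^{-1} : B \to A$. The unitary structure $\varphi$ comes from Proposition \ref{inner_2_unitary}, and $f^*$ is well-typed because $\varphi_B: B \to B^\dagger$, $f^\dagger: B^\dagger \to A^\dagger$ and $\varphi_A^{-1}: A^\dagger \to A$. On objects, put $A^* := A$. Then $(\_)^*$ is stationary on objects by construction.

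Second, functoriality comes from Lemma \ref{Lem:Adjoint-dagger}(iv): $1_A^* = 1_A$ and $(fg)^* = g^* f^*$. I would also verify these directly from the formula as a check. For identities, $\varphi_A 1^\dagger \varphi_A^{-1} = 1$. For composition, $\varphi_C g^\dagger f^\dagger \varphi_A^{-1} = \varphi_C g^\dagger \varphi_B^{-1}\varphi_B f^\dagger \varphi_A^{-1}$. So $(\_)^*: \mathbb{X}^{\rm op} \to \mathbb{X}$ is a contravariant functor.

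Third, involutivity is $f^{**} = f$, which is Lemma \ref{Lem:Adjoint-dagger}(iii). That computation uses {\bf [U.1]}/{\bf [U.2]} in the form $\varphi_A(\varphi_A^{-1})^\dagger = \iota$ and $\varphi_B^\dagger \varphi_B^{-1} = \iota^{-1}$, together with naturality of $\iota$. Hence the involutor for $(\_)^*$ can be taken to be the identity natural transformation $1: A \to A^{**} = A$. This is natural exactly because $f^{**} = f$. The coherence triangle $\iota_{X^*}(\iota_X)^* = 1$ then reduces to $1\cdot 1^* = 1$, which holds by preservation of identities.

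The main point of care, rather than a real obstacle, is the involutivity step. The required identities $\varphi_A (\varphi_A^{-1})^\dagger = \iota_A$ and $\varphi_B^\dagger\varphi_B^{-1} = \iota_B^{-1}$ must be correctly derived from {\bf [U.1]}, {\bf [U.2]} and Lemma \ref{unitary-identities}(ii). The second one follows from $\varphi_B = \iota\varphi_B^\dagger$. Once these are in hand, everything else is bookkeeping. This shows that $(\mathbb{X},(\_)^*)$ is a strict involutive category, i.e.\ a $\dagger$-category.
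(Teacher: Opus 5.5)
Your proposal is correct and follows essentially the same route as the paper, which likewise gets $f^{**}=f$ from Lemma \ref{Lem:Adjoint-dagger}(iii) and functoriality from (iv). Your direct checks from the formula $f^* = \varphi_B f^\dagger \varphi_A^{-1}$, and your check that the identity involutor satisfies naturality and the coherence triangle, are details the paper leaves implicit.
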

  \begin{proof}
     It follows from Lemma \ref{Lem:Adjoint-dagger} $(iii)$ that $f^{**} = f$, and from $(iv)$ that $1_A^* = 1_{A^*}$ and $(fg)^* = g^* f^*$.  Thus, $(\_)^*$ is a contravariant involution on $\X$.
  \end{proof}
  
  Furthermore, the identity functor now becomes a morphism of unitary categories: 
  \[ V_{\mathbb{X}}: := ({\sf Id},\varphi^{-1}) : (\mathbb{X},(\_)^\dagger) \to (\mathbb{X},(\_)^*)\]
  with the preservator $\varphi_X^{-1}: X^\dagger \to X$ (recall that $X^* = X$). This is clearly a natural transformation as for any $f: X \to Y \in \mathbb{X}$, 
\begin{eqnarray*}
    \varphi_Y^{-1} f^* \stackrel{\text{Eqn }(\ref{eqn: adjoint})
}{:=} \varphi_Y^{-1} \varphi_Y f^\dagger \varphi_X^{-1}= f^\dagger \varphi_X^{-1}
\end{eqnarray*}
And it satisfies the required coherence condition as: 
\begin{eqnarray*}
    \iota_X \varphi_{X^{\dagger}}^{-1}  ~=~ \varphi_X ~=~ \varphi_X (\varphi_X^{-1})^{\dagger} (\varphi_X)^{\dagger}  ~=~ \varphi_X (\varphi_X^{-1})^{\dagger} ((\varphi_X)^{\dagger})^{-1} ~=~ \varphi_X (\varphi_X^{-1})^{\dagger} \varphi_{X^\dagger}^{-1} ~=~ (\varphi_X^{-1})^{\ddagger}
\end{eqnarray*}

Now note that as every $\dagger$-category has an inner product structure we may define a new inner product based on the adjoint, $\langle f|g \rangle^*:= f g^*$. The functor $({\sf Id},\varphi^{-1})$ then preserves the inner product structure as 
\[ \langle f|g \rangle \varphi^{-1} = f \varphi g^\dagger \varphi^{-1} =: f g^* =: \langle f|g \rangle^* \] 
Thus by Lemma \ref{Lemma 6.6} it is a unitary functor. The functor is clearly a (unitary) equivalence. We also have an equivalence between the category of inner product (or unitary) categories and unitary functors, and the category of dagger categories and unitary functors. We define this functor by sending an inner product category to its induced dagger category and we send a unitary functor $(F, \gamma)$ to $(F, id)$. This functor is clearly bijective on objects and is full. Faithfulness of this functor follows from equation (\ref{eqn: adjoint}) for conjugation.
So we have the following proposition:

\begin{proposition}
    There is an equivalence between the category ${\sf Unitary}$ of inner product (or unitary) categories, unitary functors and transformations, and the full sub-2-category {\em $ \dagger$}-${\sf Cat}$ of dagger categories.
\end{proposition}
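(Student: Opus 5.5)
We will show that the inclusion of $\dagger$-categories into ${\sf UNITARY}$ (a $\dagger$-category with the identity unitary structure) is a 2-equivalence. The approach is to verify that this inclusion is essentially surjective up to unitary equivalence and locally an equivalence of hom-categories. The reverse 2-functor $D$ sends an inner product category $\X$ to its adjoint-involution $(\X,(\_)^*)$ of Proposition \ref{induced dagger category}. It sends a unitary functor $(F,\gamma)$ to $(F,{\sf id})$, and sends an achiral transformation $\alpha$ to the same underlying natural transformation.

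\textbf{Step 1: $D$ is well defined on 1-cells.} We must check that $F$ strictly preserves adjoints, $F(f^*) = F(f)^*$. Using $f^* = \varphi_B f^\dagger \varphi_A^{-1}$ from Lemma \ref{Lem:Adjoint-dagger}, we get
\[ F(f^*) = F(\varphi_B) F(f^\dagger) F(\varphi_A)^{-1}. \]
We then substitute $F(\varphi_X) = \varphi_{F(X)}\gamma_X^{-1}$, which holds because $(F,\gamma)$ is unitary. Naturality of $\gamma$ gives $F(f^\dagger) = \gamma_B F(f)^\dagger \gamma_A^{-1}$, and the $\gamma$'s then cancel, leaving $\varphi_{FB} F(f)^\dagger \varphi_{FA}^{-1} = F(f)^*$. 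Since both unit and involutor are identities, the coherence square of Definition \ref{defn: involutive-functor} is automatic. Functoriality of $D$ on composites then follows from the composition formula $(FG, G(\gamma)\gamma')$, because all the preservators are identities.

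\textbf{Step 2: $D$ is well defined on 2-cells.} An achiral transformation $\alpha:(F,\gamma)\to(G,\gamma')$ must be sent to a transformation between $(F,{\sf id})$ and $(G,{\sf id})$. In the $\dagger$-setting, the tethering conditions say exactly that $\alpha^{-1}=\alpha^*$, i.e.\ $\alpha$ is unitary for the adjoint involution. I would derive this from the original cube by rewriting $\gamma,\gamma'$ in terms of $\varphi$, then conjugating by $\varphi$ to convert $(\_)^\dagger$ into $(\_)^*$. This step is the main bookkeeping obstacle: the cube involves $\iota$, $\gamma_{X^\dagger}$ and the tether, and each face must be translated carefully. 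I would first reduce the cube to its single nontrivial face (the tether square) using Lemma \ref{unitary-identities}. The same translation in reverse shows that every unitary natural isomorphism between $(F,{\sf id})$ and $(G,{\sf id})$ comes from an achiral transformation. Hence $D$ is locally fully faithful on 2-cells.

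\textbf{Step 3: equivalences.} Local essential surjectivity on 1-cells is handled as follows. A $\dagger$-functor $F:D\X\to D\Y$ lifts to the unitary functor $(F, F(\varphi^{-1})\varphi)$, using Lemma \ref{Lemma 5.6}(ii) that the preservator is determined. One checks that this lift is inner-product preserving via Lemma \ref{Lemma 6.6}; this is the computation of Step 1 run backwards. Faithfulness on 1-cells follows from Lemma \ref{Lemma 5.6}(ii) as well. On objects, the unitary functor $V_\X = ({\sf Id},\varphi^{-1})$ constructed just above is an equivalence $\X \simeq D\X$ in ${\sf UNITARY}$, since its underlying functor is the identity and it is unitary. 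For a $\dagger$-category, $D$ returns it unchanged because $\varphi={\sf id}$. Together these give the 2-equivalence.
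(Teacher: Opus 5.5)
Your proposal is correct and follows essentially the paper's route. It uses the same 2-functor $\mathbb{X}\mapsto(\mathbb{X},(\_)^*)$, $(F,\gamma)\mapsto(F,{\sf id})$, with $V_{\mathbb{X}}=({\sf Id},\varphi^{-1})$ supplying the equivalence on objects, and 2-cells carried along with unchanged components, which is all the paper's sandwiching by $V$ amounts to. Where you differ is only in detail: you spell out what the paper dismisses as ``clear'', namely $F(f^*)=F(f)^*$, the lift of a $\dagger$-functor, and faithfulness on 1-cells via Lemma~\ref{Lemma 5.6}(ii).

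Two small points. In Step~3, besides naturality you must also verify the $\iota$-coherence square of Definition~\ref{defn: involutive-functor} for the lifted preservator $F(\varphi_X^{-1})\varphi_{F(X)}$; it reduces to $F(\varphi_X)$ being an isometry, which holds because $F$ preserves $(\_)^*$ and $\varphi_X$ is unitary. In Step~2, the tether face translates precisely to $\alpha_X\alpha_X^*=1$, not to invertibility, and that equation is all the bijection on 2-cells needs.
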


We define the equivalence by sending a unitary category to its $\dagger$-category and transport the 1-cells and 2-cells by sandwiching $\alpha_\X \mapsto V^{-1}_\X \alpha_\X V_\X$.


\subsection{Isometries and special maps}

It is natural to use the inner product to provide insight into the various special classes of maps available in involutive categories.  To achieve this, it is useful to regard the inner product structure as providing an abstract metric on an object.  This then gives the notion of a metric-preserving map, also known as an isometry:

\begin{definition}
In an inner product category $h: X \to Y$ is an {\bf isometry} in case for every $f,g: A \to X$, $\langle f | g \rangle = \langle f h | g h \rangle$.
\end{definition}

It is immediate that isometries compose.

\begin{lemma} \label{isometry}
     The following are equivalent in an inner product category:
     \[ (i)~~ h: X \to Y \text{ is an isometry;} \quad \quad \quad \quad 
     (ii)~~ h \varphi_Y h^\dagger = \varphi_X \quad \quad \quad \quad 
     (iii)~~ hh^* = 1_X 
     \]
\end{lemma}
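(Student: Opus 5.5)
We prove the cycle (i)$\Rightarrow$(ii)$\Rightarrow$(iii)$\Rightarrow$(i), working throughout with the explicit descriptions available in an inner product category. By Proposition \ref{inner_2_unitary} and Lemma \ref{inner-to-unitary}, the inner product is $\langle f|g\rangle = f\varphi_X g^\dagger$ with $\varphi_X = \langle 1_X|1_X\rangle$. By Lemma \ref{Lem:Adjoint-dagger}, the adjoint is $h^* = \varphi_Y h^\dagger \varphi_X^{-1}$ and satisfies $\langle xf|y\rangle = \langle x|yf^*\rangle$.

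For (i)$\Rightarrow$(ii), we specialise the isometry condition to $A = X$ and $f = g = 1_X$. This gives
\[ \varphi_X = \langle 1_X|1_X\rangle = \langle h|h\rangle = h\varphi_Y h^\dagger. \]

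For (ii)$\Rightarrow$(iii), we compose (ii) on the right with $\varphi_X^{-1}$. This yields $h\varphi_Y h^\dagger\varphi_X^{-1} = 1_X$, and the left-hand side is exactly $hh^*$ by equation (\ref{eqn: adjoint}).

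For (iii)$\Rightarrow$(i), we take $f,g: A \to X$ and apply the defining property of the adjoint with $x = fh$ and $y = g$, so that the map being moved across is $h$ itself:
\[ \langle fh|gh\rangle = \langle fhh^*|g\rangle = \langle f|g\rangle. \]
The first step needs the variant $\langle x f^*|y\rangle = \langle x|yf\rangle$ of Lemma \ref{Lem:Adjoint-dagger}(ii), read from right to left. Alternatively, we can compute directly with {\bf [IP.1]}:
\[ \langle fh|gh\rangle = f\langle h|h\rangle g^\dagger = f h\varphi_Y h^\dagger g^\dagger. \]
Then (iii) gives $h\varphi_Y h^\dagger = hh^*\varphi_X = \varphi_X$, so the expression equals $f\varphi_X g^\dagger = \langle f|g\rangle$.

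The only point needing care is matching the direction conventions for $f^*$ in Lemma \ref{Lem:Adjoint-dagger}, so that $hh^*$ (rather than $h^*h$) is what appears. The direct {\bf [IP.1]} computation avoids this issue entirely, so we would use it as the main argument.
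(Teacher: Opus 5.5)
Your proof is correct and follows the same cycle (i)$\Rightarrow$(ii)$\Rightarrow$(iii)$\Rightarrow$(i) as the paper, and your first two steps are identical to the paper's. For (iii)$\Rightarrow$(i), the paper applies the defining property of the adjoint with $x=f$ and $y=gh$, which gives $\langle fh|gh\rangle = \langle f|ghh^*\rangle = \langle f|g\rangle$ directly; so you need neither variant (ii) of Lemma \ref{Lem:Adjoint-dagger} nor the worry about direction conventions, although your direct {\bf [IP.1]} computation is an equally valid alternative.
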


\begin{proof} ~
\begin{description}
\item{$(i) \Rightarrow  (ii)$:} 
If the inner product is preserved by $h$ then this preservation must hold for the identity maps on $X$ that is $\langle 1_X | 1_X \rangle = \langle h | h \rangle = h \langle 1_Y | 1_Y \rangle h^\dagger$ so that $h \varphi_Y h^\dagger = \varphi_X$ as required.
\item{$(ii) \Rightarrow (iii)$:}  If $h\varphi_Y h^\dagger = \varphi_X$
then $h h^* := h \varphi_Y h^\dagger \varphi_X^{-1} = \varphi_X \varphi_X^{-1} = 1_X.$
\item{$(iii) \Rightarrow (i)$:}
If $h h^* = 1_X$ then $\langle f h | g h \rangle = \langle f | gh h^* \rangle = \langle f | g \rangle$. \qedhere 
\end{description}
\end{proof}

Clearly if an isomorphism is an isometry its inverse is automatically an isometry:

\begin{lemma} \label{Lemma7.3}
In an inner product category, the following are equivalent for an isomorphism  $h: X \to Y$:
\begin{enumerate}[(i)]
\item $h$ and $h^*$ are isometries;
\item either $h$ or $h^{-1}$ is an isometry;
\item $h^\dagger =\varphi_Y^{-1} h^{-1} \varphi_X$
or equivalently $h = \varphi_X (h^{-1})^\dagger \varphi_Y^{-1}$;
\item $h^{-1} = h^*$.
\end{enumerate}
\end{lemma}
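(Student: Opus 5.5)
The plan is to route everything through the criterion of Lemma \ref{isometry}, that $h$ is an isometry iff $hh^* = 1_X$, together with the functoriality and involutivity of the adjoint from Lemma \ref{Lem:Adjoint-dagger} ($(fg)^* = g^*f^*$, $1^* = 1$, $f^{**} = f$). Throughout, $h: X \to Y$ is an isomorphism. The central claim is that (iv) is equivalent to each of the others, with (iii) serving only as a rewriting of (iv).

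First, (iii)$\Leftrightarrow$(iv). Since $h^* = \varphi_Y h^\dagger \varphi_X^{-1}$, the equation $h^{-1} = h^*$ becomes $h^{-1} = \varphi_Y h^\dagger \varphi_X^{-1}$, that is, $h^\dagger = \varphi_Y^{-1} h^{-1}\varphi_X$. The second form in (iii) is obtained by applying $(\_)^\dagger$ to the first and then using $(\varphi_X)^\dagger = \varphi_{X^\dagger}^{-1}$ (Lemma \ref{unitary-identities}(i)), $h^{\dagger\dagger} = \iota h \iota^{-1}$, and $\varphi_X\varphi_{X^\dagger} = \iota$ from [U.2] to cancel the involutors. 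Equivalently, it is the same equation read through $f = \varphi_A (f^*)^\dagger \varphi_B^{-1}$ applied to $h^{-1}$, using $(h^{-1})^* = (h^*)^{-1}$, which follows from functoriality of $(\_)^*$. This bookkeeping of $\iota$'s and $\varphi$'s is the only fiddly part, and I would do it with Lemma \ref{unitary-identities}(ii) and (iv) at hand.

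Next, (iv)$\Rightarrow$(i). If $h^* = h^{-1}$, then $hh^* = 1_X$, so $h$ is an isometry by Lemma \ref{isometry}. Also $h^*(h^*)^* = h^{-1}h = 1_Y$ because $h^{**} = h$, so $h^*$ is an isometry as well. The step (i)$\Rightarrow$(ii) is trivial, since $h$ itself is an isometry.

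Finally, (ii)$\Rightarrow$(iv). If $h$ is an isometry, then $hh^* = 1_X$, and since $h$ is invertible this gives $h^* = h^{-1}$. If instead $h^{-1}$ is an isometry, then $h^{-1}(h^{-1})^* = 1_Y$. Because $(h^{-1})^* = (h^*)^{-1}$, this reads $h^{-1}(h^*)^{-1} = 1$, and inverting yields $h^*h = 1_Y$. Again by invertibility of $h$, we get $h^* = h^{-1}$. This closes the cycle of implications.
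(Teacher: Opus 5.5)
Your proof is correct and is essentially the paper's argument: both rest on the criterion $hh^* = 1_X$ from Lemma~\ref{isometry} and the formula $h^* = \varphi_Y h^\dagger \varphi_X^{-1}$. The differences are small: you arrange the implications around (iv) rather than as the cycle (i)$\Rightarrow$(ii)$\Rightarrow$(iii)$\Rightarrow$(iv)$\Rightarrow$(i), and you explicitly handle the case where $h^{-1}$ is the isometry via $(h^{-1})^* = (h^*)^{-1}$, which the paper's proof of (ii)$\Rightarrow$(iii) does not treat separately. (Minor slip: in the paper's diagrammatic convention, naturality gives $h^{\dagger\dagger} = \iota^{-1} h \iota$; also, the second form of (iii) follows more directly by inverting both sides of the first, using $(h^\dagger)^{-1} = (h^{-1})^\dagger$.)
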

\begin{proof}
    \begin{description}
        \item[$(i) \Rightarrow (ii)$] Immediate.
        \item[$(ii) \Rightarrow (iii)$] Given that $h$ an isometry. Then, from Lemma \ref{isometry}(ii), we have that: $h \varphi_Y h^\dagger = \varphi_X$. Using the fact that $h$ is an isomorphism, we can deduce that $h^\dagger =\varphi_Y^{-1} h^{-1} \varphi_X$.   
        \item[$(iii) \Rightarrow (iv)$] Given that $h^\dagger = \varphi_Y^{-1} h^{-1} \varphi_X$. Then, from Lemma \ref{isometry}(iii), we have that $hh^* = 1_X$. Since, $h$ is an isomorphism, $hh^* = hh^{-1}$ which in turn implies that $h^* = h^{-1}$.
        \item[$(iv) \Rightarrow (i)$]
        When $h^{-1} = h^*$ then 
        \[ \langle f|g \rangle = \langle f hh^*|g \rangle
         = \langle f h|g h \rangle \]
         and so $h$ is an isometry.  Similarly, $h^*$ is an 
         isometry as 
         \[ \langle f h^*|g h^* \rangle 
            = \langle f |g h^* h^{**} \rangle
               = \langle f |g h^* h \rangle = \langle f |g \rangle. \qedhere \]
    \end{description}
\end{proof}
\begin{definition} 
In an inner product category an isomorphism is said to be {\bf unitary} if it satisfies any of the equivalent conditions of Lemma \ref{Lemma7.3}. 
\end{definition}

\begin{lemma} \label{Lemma 7.5} \label{unitary}~
\begin{enumerate}[(i)]
    \item $f: A \to X$ is a unitary map if and only if $f^\dagger: X^\dagger \to A^\dagger$ is unitary.
    \item The unitary structure map $\varphi_A$ is an isometry.
    \item The involutor map $\iota: A \to A^{\dagger \dagger}$ is an isometry.
\end{enumerate}
\end{lemma}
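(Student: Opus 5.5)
The plan is to reduce every part to the concrete criterion of Lemma \ref{Lemma7.3}(iii)/(iv) together with Lemma \ref{isometry}(ii). A map $h: X \to Y$ is an isometry exactly when $h\varphi_Y h^\dagger = \varphi_X$. An isomorphism is unitary exactly when $h^\dagger = \varphi_Y^{-1} h^{-1}\varphi_X$. Throughout I use the unitary identities of Lemma \ref{unitary-identities} and the axioms \textbf{[U.1]}, \textbf{[U.2]}; by Proposition \ref{inner_2_unitary} these hold for $\varphi := \langle 1|1\rangle$.

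For (ii), I check $\varphi_A \varphi_{A^\dagger} \varphi_A^\dagger = \varphi_A$. By Lemma \ref{unitary-identities}(i), $\varphi_A^\dagger = \varphi_{A^\dagger}^{-1}$, so the left side collapses to $\varphi_A$ at once. For (iii), I check $\iota_A \varphi_{A^{\dagger\dagger}} \iota_A^\dagger = \varphi_A$. Apply \textbf{[U.2]} at $A^\dagger$ to get $\varphi_{A^{\dagger\dagger}} = \varphi_{A^\dagger}^{-1}\iota_{A^\dagger}$. Using \textbf{[U.2]} at $A$, $\iota_A\varphi_{A^\dagger}^{-1} = \varphi_A$, so the expression becomes $\varphi_A \iota_{A^\dagger}\iota_A^\dagger$. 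The involution axiom $\iota_{A^\dagger}(\iota_A)^\dagger = 1$ then finishes it. This is the one place where that triangle identity is genuinely needed. Alternatively, one could use naturality of $\iota$ together with $\iota_{A^\dagger} = ((\iota_A)^\dagger)^{-1}$.

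For (i), suppose $f: A \to X$ is unitary, so $f^\dagger = \varphi_X^{-1} f^{-1} \varphi_A$. I must show $f^\dagger: X^\dagger \to A^\dagger$ satisfies
\[
f^{\dagger\dagger} = \varphi_{A^\dagger}^{-1} (f^\dagger)^{-1} \varphi_{X^\dagger}.
\]
First rewrite $f^{\dagger\dagger} = \iota_A^{-1} f \iota_X$ by naturality. Then substitute $\varphi_{A^\dagger}^{-1} = \iota_A^{-1}\varphi_A$ and $\varphi_{X^\dagger} = \varphi_X^{-1}\iota_X$, both from \textbf{[U.2]}. The goal becomes $f = \varphi_A (f^\dagger)^{-1}\varphi_X^{-1}$. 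Inverting the hypothesis gives $(f^\dagger)^{-1} = \varphi_A^{-1} f \varphi_X$, which yields exactly this.

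For the converse, apply the forward direction to $f^\dagger$ to conclude that $f^{\dagger\dagger}$ is unitary. Then write $f = \iota_A f^{\dagger\dagger}\iota_X^{-1}$. By (iii), $\iota$ is an isometric isomorphism, hence unitary, and its inverse is unitary by Lemma \ref{Lemma7.3}(ii). Since isometries compose, $f$ is an isometric isomorphism, i.e.\ unitary.

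The main obstacle is bookkeeping rather than ideas. One has to keep track of which instance ($A$ versus $A^\dagger$) of $\varphi$ and $\iota$ appears, and use \textbf{[U.2]} in its rearranged forms correctly. I would verify each substitution against the types of the maps.
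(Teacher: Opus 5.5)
Your proof is correct. Part (ii) is the paper's computation, but (i) and (iii) go differently.

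For (iii), the paper uses \textbf{[U.1]} at $A^\dagger$ instead of \textbf{[U.2]}: $\iota_A\varphi_{A^{\dagger\dagger}}\iota_A^\dagger = \iota_A(\iota_A\varphi_{A^\dagger}^{-1})^\dagger = \iota_A\varphi_A^\dagger = \varphi_A$, by \textbf{[U.2]} and Lemma~\ref{unitary-identities}(ii). It never invokes the triangle identity. So your remark that this is where the triangle identity is ``genuinely needed'' is inaccurate. In a unitary category the triangle identity follows from the axioms: \textbf{[U.2]} at $A^\dagger$, combined with \textbf{[U.1]} at $A$ and $A^\dagger$, gives $\iota_{A^\dagger} = (\varphi_A^{-1})^\dagger(\varphi_{A^\dagger}^{-1})^\dagger = ((\varphi_A\varphi_{A^\dagger})^{-1})^\dagger = (\iota_A^{-1})^\dagger$. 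Using it is still legitimate, since it is part of the definition of an involutive category.

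For (i), the paper works with the isometry form $\varphi_A = f\varphi_X f^\dagger$. It solves for $\varphi_X$, then applies $(\_)^\dagger$ to the inverse and uses \textbf{[U.1]} to reach $\varphi_{X^\dagger} = f^\dagger\varphi_{A^\dagger}f^{\dagger\dagger}$. Its converse runs the same calculation backwards and removes the outer $\dagger$ by faithfulness of $(\_)^\dagger$.

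Your forward direction needs only naturality of $\iota$ and \textbf{[U.2]}, with no use of \textbf{[U.1]}. Your converse replaces the second calculation and the faithfulness argument with a structural bootstrap. You apply the forward direction to $f^\dagger$, then conjugate $f^{\dagger\dagger}$ by the unitary $\iota$ from (iii), using that isometries compose and are closed under inverses. This makes (i) depend on (iii), which is harmless because you establish (iii) first and independently of (i).
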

\begin{proof} ~
\begin{enumerate}[{\em (i)}]
\item First when $f$ is unitary, then $\varphi_X= f^{-1} \varphi_A (f^{\dagger})^{-1}$, so we have
\begin{eqnarray*}
    \varphi_{X^\dagger}=((f^{-1} \varphi_A (f^{\dagger})^{-1})^{-1})^{\dagger}
     = (f^\dagger \varphi_A^{-1} f)^{\dagger} =
    f^\dagger (\varphi_A^{-1})^\dagger f^{\dagger \dagger}
    = f^\dagger \varphi_{A^\dagger} f^{\dagger \dagger}
\end{eqnarray*}
So $f^\dagger$ is unitary. And when $f^\dagger$ is unitary , we have 
\begin{eqnarray*}
    (\varphi_{A}^{-1})^{\dagger} = (f^\dagger)^{-1} \varphi_{X^\dagger} (f^{\dagger \dagger})^{-1} = (f^\dagger)^{-1} (\varphi_{X}^{-1})^{\dagger} (f^{\dagger \dagger})^{-1} = ((f^\dagger)^{-1} \varphi_X^{-1} f^{-1})^{\dagger}
\end{eqnarray*}
So by faithfulness of $\dagger$ functor, we have $\varphi_A^{-1} =(f^\dagger)^{-1} \varphi_X^{-1} f^{-1}$, and hence $\varphi_A = f \varphi_X f^\dagger$, so $f$ is unitary.
\item For the unitary structure map, we have:
\begin{eqnarray*}
    \varphi_A \varphi_{A^\dagger} \varphi_{A}^{\dagger}= \varphi_A (\varphi_A^{-1})^{\dagger} \varphi_{A}^{\dagger} = \varphi_A (\varphi_A \varphi_A^{-1})^{\dagger} = \varphi_A
\end{eqnarray*}
So $\varphi_A: A \to A ^\dagger$ is  an isometry.
\item For the involutor we have, 
\begin{eqnarray*}
    \iota_A \varphi_{A^{\dagger\dagger}} \iota_A^{\dagger} = \iota_A (\varphi_{A^\dagger}^{-1})^{\dagger } \iota_A^{\dagger} = \iota_A \varphi_A^{\dagger} = \varphi_A
\end{eqnarray*}
where the last equality is by  Lemma \ref{unitary-identities} (ii). \qedhere
\end{enumerate}
\end{proof}
Note that the unitary structure map and the involutor are, of course, also unitary as they are isometries which are isomorphisms.

\medskip
\begin{definition}
A map $a: X \to X$ is {\bf Hermitian} in case, for every $f,g: A \to X$, 
$\langle f a| g \rangle = \langle f| g a \rangle$.
\end{definition}

\begin{lemma}
\label{Hermitian}
    In an inner product category the following are equivalent:
    \[ (i)~~ a: X \to X \text{ is Hermitian;} \quad \quad \quad \quad
    (ii)~~ a \varphi_X = \varphi_X a^\dagger; \quad \quad \quad \quad
    (iii)~~~ a=a^*. \]
\end{lemma}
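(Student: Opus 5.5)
We prove the cycle of implications $(i)\Rightarrow(ii)\Rightarrow(iii)\Rightarrow(i)$, following the pattern of the proof of Lemma \ref{isometry}. The tools are the pre-composition axiom {\bf [IP.1]}, the formula $\langle f|g\rangle = f\varphi_X g^\dagger$ (Lemma \ref{inner-to-unitary} together with Proposition \ref{inner_2_unitary}), and the explicit adjoint $a^* = \varphi_X a^\dagger \varphi_X^{-1}$ from Lemma \ref{Lem:Adjoint-dagger}.

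For $(i)\Rightarrow(ii)$, we specialise the Hermitian condition to $A = X$ and $f = g = 1_X$. This gives $\langle a|1_X\rangle = \langle 1_X|a\rangle$. By {\bf [IP.1]}, the left side is $a\langle 1_X|1_X\rangle = a\varphi_X$, and the right side is $\langle 1_X|1_X\rangle a^\dagger = \varphi_X a^\dagger$. This is exactly $(ii)$.

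For $(ii)\Rightarrow(iii)$, we substitute into the formula for the adjoint: $a^* = \varphi_X a^\dagger \varphi_X^{-1} = a\varphi_X\varphi_X^{-1} = a$.

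For $(iii)\Rightarrow(i)$, we use the defining property of the inner adjoint, Lemma \ref{Lem:Adjoint-dagger}(i). It gives $\langle fa|g\rangle = \langle f|ga^*\rangle = \langle f|ga\rangle$ for all $f,g: A\to X$, so $a$ is Hermitian. Alternatively, one can compute directly with {\bf [IP.1]}: $\langle fa|g\rangle = fa\varphi_X g^\dagger = f\varphi_X a^\dagger g^\dagger = \langle f|ga\rangle$, which in fact shows $(ii)\Rightarrow(i)$ directly.

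No step is a genuine obstacle. The only point needing care is the identification $\langle h|k\rangle = h\varphi_X k^\dagger$ in an arbitrary inner product category, as opposed to one built from a unitary structure. This follows from {\bf [IP.1]}, since $\langle h\cdot 1_X | k\cdot 1_X\rangle = h\langle 1_X|1_X\rangle k^\dagger = h\varphi_X k^\dagger$. Once this is noted, every implication is a one-line calculation.
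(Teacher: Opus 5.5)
Your proof is correct and follows the paper's argument essentially verbatim. It uses the same cycle $(i)\Rightarrow(ii)\Rightarrow(iii)\Rightarrow(i)$: specialising to $f=g=1_X$ and applying \textbf{[IP.1]}, substituting (ii) into $a^*=\varphi_X a^\dagger\varphi_X^{-1}$, and invoking the defining property of the inner adjoint (your extra direct computation $\langle fa|g\rangle = fa\varphi_X g^\dagger = f\varphi_X a^\dagger g^\dagger = \langle f|ga\rangle$ is a harmless bonus that avoids relying on the existence half of Lemma~\ref{Lem:Adjoint-dagger}).
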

\begin{proof} ~
\begin{description}
\item{$(i) \Rightarrow (ii)$:} If $a$ is Hermitian then $\langle 1_X a | 1_X \rangle = \langle 1_X | 1_X a \rangle$ so that $a \varphi_X = \varphi_X a^\dagger$.
\item{$(ii) \Rightarrow (iii)$:} By Lemma \ref{isometry}, $a^* = \varphi_X a^\dagger \varphi_X^{-1}$ so $a = a^*$ when $a \varphi_X = \varphi_X a^\dagger$.
\item{$(iii) \Rightarrow (i)$:}  Immediate from the definition of the adjoint. \qedhere
\end{description}
\end{proof}

\begin{definition} In an inner product category, 
\begin{itemize} 
\item A map $f: X \to X^\dagger$ is {\bf positive} if there is a map $q: X \to Y$ with 
$f = \langle q | q \rangle$. 
\item A map $f: X \to X$ is {\bf normal} if $\langle f|f\rangle \varphi_{X^\dagger} = \varphi_X \langle f^\dagger| f^\dagger\rangle$.
\end{itemize}
\end{definition}

\begin{remark}{\em How does this notion of positive maps relate to the usual notion of positive maps in a dagger category? Recall that, in a dagger category a map $f: X \rightarrow X$ is called positive if there exists a map $g: X \rightarrow Y$ such that $f= g g^\dagger$. When the involution is identity-on-objects. Our definition, in a $\dagger$-category, thus, reduces to this standard definition of positive maps. } 
\end{remark}

\begin{lemma} 
There are the following containment relations:
\begin{enumerate} [(i)]
        \item Every Hermitian map is normal.
        \item Every unitary endomorphism is normal.
    \end{enumerate}
\end{lemma}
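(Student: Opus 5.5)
The plan is to first turn the normality condition into the familiar ``$ff^* = f^*f$'' form using the adjoint from Lemma \ref{Lem:Adjoint-dagger}. Both containments then follow at once from the characterizations of Hermitian maps (Lemma \ref{Hermitian}) and of unitary maps (Lemma \ref{Lemma7.3}).

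\emph{Step 1: rewrite normality.} Let $f: X \to X$. By the definition of the inner product in a unitary category (Lemma \ref{inner-to-unitary}) we have $\langle f|f\rangle = f\varphi_X f^\dagger$ and $\langle f^\dagger|f^\dagger\rangle = f^\dagger \varphi_{X^\dagger} f^{\dagger\dagger}$. The normality condition therefore reads
\[ f\varphi_X f^\dagger \varphi_{X^\dagger} = \varphi_X f^\dagger \varphi_{X^\dagger} f^{\dagger\dagger}. \]
Axiom {\bf [U.2]} gives $\varphi_X\varphi_{X^\dagger} = \iota_X$, so $\varphi_{X^\dagger} = \varphi_X^{-1}\iota_X$. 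We substitute this on both sides and use the formula $f^* = \varphi_X f^\dagger \varphi_X^{-1}$ from Lemma \ref{Lem:Adjoint-dagger}. The left side becomes $f\varphi_X f^\dagger\varphi_X^{-1}\iota_X = ff^*\iota_X$. The right side becomes $\varphi_X f^\dagger \varphi_X^{-1}\iota_X f^{\dagger\dagger} = f^*\iota_X f^{\dagger\dagger}$. By naturality of $\iota$, $\iota_X f^{\dagger\dagger} = f\iota_X$, so the right side equals $f^*f\iota_X$. Since $\iota_X$ is an isomorphism, we conclude that $f$ is normal if and only if $ff^* = f^*f$.

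\emph{Step 2: the containments.} For (i), if $a$ is Hermitian then $a = a^*$ by Lemma \ref{Hermitian}. Hence $aa^* = aa = a^*a$, and $a$ is normal by Step 1. For (ii), if $u: X\to X$ is a unitary endomorphism then $u^* = u^{-1}$ by Lemma \ref{Lemma7.3}(iv). Hence $uu^* = 1_X = u^*u$, and $u$ is normal by Step 1.

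The only real work is the bookkeeping in Step 1. The key points are to trade $\varphi_{X^\dagger}$ for $\varphi_X^{-1}\iota$ via {\bf [U.2]} rather than via {\bf [U.1]}, and to move $\iota$ past $f^{\dagger\dagger}$ by naturality. Once this is done the argument is immediate.
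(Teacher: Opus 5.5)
Your proof is correct, but it is organized differently from the paper's. The paper never introduces the adjoint. For (i), it uses $a\varphi_X = \varphi_X a^\dagger$ to slide $a$ past the unitary structure maps, and shows that both sides of the normality equation reduce to $aa\iota$ (via {\bf [U.2]} and naturality of $\iota$). For (ii), it uses the isometry identity $a\varphi_X a^\dagger = \varphi_X$ on the left. On the right, it first invokes Lemma~\ref{unitary}(i) to know that $a^\dagger$ is also unitary, and then uses $a^\dagger\varphi_{X^\dagger}a^{\dagger\dagger} = \varphi_{X^\dagger}$. Both sides then collapse to $\varphi_X\varphi_{X^\dagger} = \iota$.

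You instead prove once and for all that $f$ is normal iff $ff^* = f^*f$. After that, both containments are one-liners from $a = a^*$ and $u^* = u^{-1}$. In effect, the paper's two computations are your Step 1 specialized to each case: they produce $aa^*\iota$ and $a^*a\iota$ with $a^* = a$, and $uu^*\iota = u^*u\iota = \iota$ respectively.

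Your route gives a clean conceptual statement: the paper's inner-product definition of normality agrees with the usual notion of normality in the induced $\dagger$-category $(\mathbb{X},(\_)^*)$. It also removes the need for Lemma~\ref{unitary}(i), and gives further facts for free (e.g.\ $f$ is normal iff $f^*$ is, since $f^{**} = f$). The paper's route is more elementary, since it works directly with the unitary structure identities and does not need the adjoint lemma. The cost is that it redoes the bookkeeping case by case.
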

For proof, see Appendix \ref{Sec: App special maps}.


\label{tab:special}

\bibliographystyle{plain}
\bibliography{main}

\newpage
\begin{appendix}

\section{Achiral involutions}
\label{Sec: App Achiral involutions}

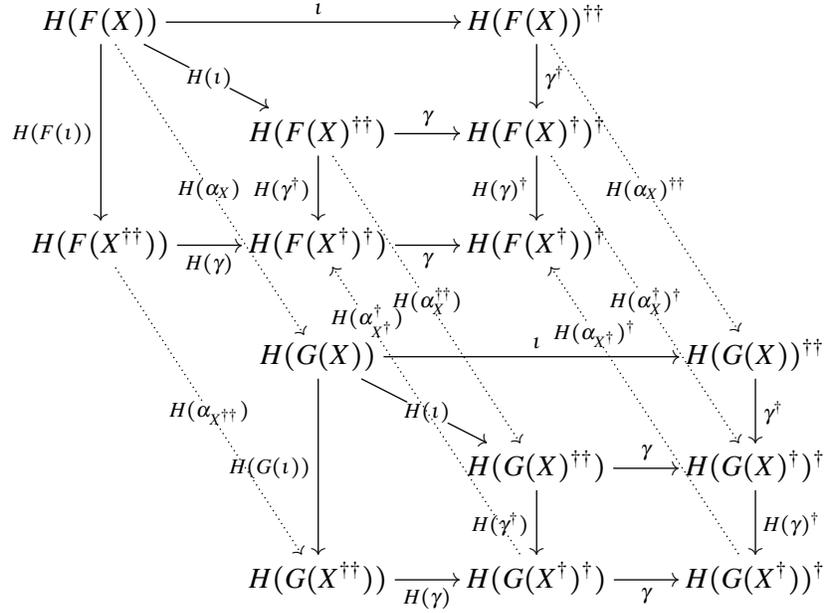
\begin{figure}[h]
    \centering
 \[ \xymatrix{ H(F(X)) \ar[rr]^\iota \ar[dd]_{H(F(\iota))} \ar[dr]|{H(\iota)} \ar@{..>}[dddr]|{H(\alpha_X)} 
                        & & H(F(X))^{\dagger\dagger} \ar[d]^{\gamma^\dagger} 
                             \ar@{..>}[dddr]|{H(\alpha_X)^{\dagger\dagger}} \\
                & H(F(X)^{\dagger\dagger}) \ar[r]^\gamma \ar[d]_{H(\gamma^\dagger)} \ar@{..>}[dddr]|{H(\alpha_X^{\dagger\dagger})}
                      & H(F(X)^\dagger)^\dagger \ar[d]_{H(\gamma)^\dagger} \ar@{..>}[dddr]|{H(\alpha_X^\dagger)^\dagger} \\
               H(F(X^{\dagger\dagger})) \ar[r]_{H(\gamma)} \ar@{..>}[dddr]|{H(\alpha_{X^{\dagger\dagger}})}
                      & H(F(X^\dagger)^\dagger) \ar[r]_{\gamma} 
                              \ar@{<..}[dddr]|<<<<<<<<<{H(\alpha_{X^\dagger}^\dagger)}
                      & H(F(X^\dagger))^\dagger \ar@{<..}[dddr]|<<<<<<<<<<<{H(\alpha_{X^\dagger})^\dagger} \\
                & H(G(X))   \ar[rr]^\iota \ar[dd]_{H(G(\iota))} \ar[dr]|{H(\iota)}  
                        & & H(G(X))^{\dagger\dagger} \ar[d]^{\gamma^\dagger} \\
                & & H(G(X)^{\dagger\dagger}) \ar[r]^\gamma \ar[d]_{H(\gamma^\dagger)} 
                      & H(G(X)^\dagger)^\dagger \ar[d]^{H(\gamma)^\dagger} \\
               & H(G(X^{\dagger\dagger})) \ar[r]_{H(\gamma)}
                      & H(G(X^\dagger)^\dagger) \ar[r]_{\gamma} 
                      & H(G(X^\dagger))^\dagger
               } \] 
    \caption{Whiskering on the right}
    \label{fig:Whisker}
\end{figure}

\FloatBarrier

\begin{lemma}
For any achiral involutive category the involutor itself is an (achiral) transformation.
\end{lemma}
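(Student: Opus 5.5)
The plan is to exhibit the involutor $\iota$ as a 2-cell
\[ \iota : ({\sf Id}, 1) \Rightarrow ((\_)^{\dagger\dagger}, 1) \]
between two involutive endofunctors of $\X$. Once both functor structures are fixed, it remains to check the faces of the cube in the definition of an achiral transformation. I expect naturality to handle the ordinary faces and the involution axiom $\iota_{X^\dagger}(\iota_X)^\dagger = 1$ to handle the tethered faces, in line with the earlier remark that this axiom is exactly what makes $\iota$ achiral.

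\emph{Step 1: the two involutive functors.} The identity functor with identity preservator is trivially involutive. For the double dagger $(\_)^{\dagger\dagger}$, both sides of the required preservator are the literal object $X^{\dagger\dagger\dagger}$, since $(X^\dagger)^{\dagger\dagger} = (X^{\dagger\dagger})^\dagger$. So I take the preservator $\gamma_X = 1_{X^{\dagger\dagger\dagger}}$, which is natural. The coherence square of Definition \ref{defn: involutive-functor} then reduces to
\[ (\iota_X)^{\dagger\dagger} = \iota_{X^{\dagger\dagger}} . \]
To prove this, I apply naturality of $\iota$ to the map $\iota_X$ itself, which gives $\iota_X\,\iota_{X^{\dagger\dagger}} = \iota_X\,(\iota_X)^{\dagger\dagger}$. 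Cancelling the isomorphism $\iota_X$ yields the identity.

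\emph{Step 2: the untethered faces.} These are the top and left faces of the cube. With $F = {\sf Id}$, $G = (\_)^{\dagger\dagger}$, $\alpha = \iota$ and trivial preservators, each of them reduces to a naturality square of $\iota$ at the map $\iota_X$. It is either $\iota_X\,\iota_{X^{\dagger\dagger}} = \iota_X\,(\iota_X)^{\dagger\dagger}$ directly, or that equation combined with Step 1. The front and back faces are just naturality of $\iota$ (and of its double dagger) with identity preservators inserted, so they also hold.

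\emph{Step 3: the tethered faces (main obstacle).} These are the faces containing the tether $\alpha_{X^\dagger}^\dagger = (\iota_{X^\dagger})^\dagger$, which runs backwards from $X^{\dagger\dagger\dagger\dagger}$ to $X^{\dagger\dagger}$. With identity preservators, the bottom face asks that $\iota_{X^{\dagger\dagger}}$ followed by the tether equal the identity:
\[ \iota_{X^{\dagger\dagger}}\,(\iota_{X^\dagger})^\dagger = 1_{X^{\dagger\dagger}} . \]
This is precisely the involution axiom $\iota_{Y^\dagger}(\iota_Y)^\dagger = 1_{Y^\dagger}$ instantiated at $Y = X^\dagger$. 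The right face is analogous, after using Step 1 to rewrite $(\iota_X)^{\dagger\dagger}$ as $\iota_{X^{\dagger\dagger}}$.

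The delicate part is bookkeeping rather than ideas: I must identify exactly which composite each face of the cube denotes when $F$ is the identity. I also need to confirm that no face secretly requires a non-identity preservator, which could happen if one instead chooses $\gamma$ built from $\iota$. If some face fails to close with $\gamma = 1$, my fallback is to take $\gamma_X = \iota_{X^{\dagger\dagger\dagger}}^{-1}(\iota_{X^\dagger})^{\dagger\dagger}$, which equals $1$ by Step 1 and should reconcile the two descriptions.
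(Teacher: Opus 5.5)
Your proposal is correct and follows the paper's proof: the paper also takes $\iota : ({\sf Id},1) \Rightarrow ((\_)^{\dagger\dagger},1)$ with identity preservators, uses $\iota_X^{\dagger\dagger} = \iota_{X^{\dagger\dagger}}$ for the untethered faces, and closes the bottom and right (tethered) faces with the involution axiom instantiated at $X^\dagger$. Your derivation of $\iota_X^{\dagger\dagger} = \iota_{X^{\dagger\dagger}}$ from naturality at $\iota_X$ supplies the step the paper only states; one small mislabel is that the front and back faces are the involutive-functor coherence squares for $(\_)^{\dagger\dagger}$ and ${\sf Id}$ rather than naturality squares, but they hold by your Step 1 (the back one trivially), so the fallback choice of $\gamma$ is never needed.
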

\begin{proof}
Consider the following diagram:
\[ \xymatrix{ X \ar[dd]_{\iota_X}\ar[rr]^{\iota_X} \ar@{..>}[dr]_{\iota_X} & & 
       X^{\dagger\dagger} \ar@{..>}[dr]^{\iota_X^{\dagger\dagger}} \ar@{=}[dd]|\hole  \\ 
       & X^{\dagger\dagger} \ar[rr]_<<<<<<<<<<{\iota_{X_{\dagger\dagger}}} \ar[dd]^<<<<<<<{\iota_X^{\dagger\dagger}}  &&   X^{\dagger\dagger\dagger\dagger} \ar@{=}[dd] \\
       X^{\dagger\dagger} \ar@{..>}[dr]_{\iota_{X^{\dagger\dagger}}} \ar@{=}[rr]|\hole && 
             X^{\dagger\dagger} \\
       & X^{\dagger\dagger\dagger\dagger} \ar@{=}[rr] && X^{\dagger\dagger\dagger\dagger} 
       \ar@{..>}[lu]_{\iota_{X^\dagger}^\dagger} } \]
  All the squares commute: notice that $\iota_X^{\dagger\dagger} = \iota_{X^{\dagger\dagger}}$ and 
  the bottom and rightmost square commute using the adjoint duality.
\end{proof}

\section{Inner product categories}
\label{Sec: App inner product}

\begin{definition}
\label{Defn: App inner product}
   An {\bf inner product category} is an involutive category which has an {\bf inner product combinator}:
   \[ \infer{\langle f|g \rangle: A \to B^\dagger}{f: A \to X & g: B \to X} \]
    satisfying axioms {\bf [IP.1]}--{\bf [IP.4]} listed below.

    \begin{enumerate}[{\bf [{IP}.1]}]
        \item {\bf Pre-composition:} $\langle hf | kg \rangle = h \langle f | g \rangle k ^\dagger$
        
        That is, given maps $f:A \to X$, $g: B \to X$, $h: Y \to A$, and $k: Z \to B$, the following diagram commutes:
        \[\begin{tikzcd}
        	Y & A \\
        	{Z^\dagger} & {B^\dagger}
        	\arrow["h", from=1-1, to=1-2]
        	\arrow["{\langle hf | kg \rangle}"', from=1-1, to=2-1]
        	\arrow["{=}"{description}, draw=none, from=1-1, to=2-2]
        	\arrow["{\langle f | g \rangle}", from=1-2, to=2-2]
        	\arrow["{k^\dagger}", from=2-2, to=2-1]
        \end{tikzcd}\]
        
        \item {\bf Dagger and duals:} (a) $\iota \langle f^\dagger | g^\dagger \rangle^\iota = \langle f | g \rangle $ and (b) $\iota \langle f | g \rangle^\dagger = \langle f | g \rangle$.
        
        That is, given $f: A \to X$ and $g: B \to X$, the following diagrams commute: 
        
        \[(a) \quad \quad \begin{tikzcd}
        	A \\
        	{A^{\dagger \dagger}} && {B^\dagger}
        	\arrow["\iota"', from=1-1, to=2-1]
        	\arrow["{\langle f | g \rangle}", from=1-1, to=2-3]
        	\arrow["{\langle f^\dagger | g^\dagger \rangle^\iota}"', from=2-1, to=2-3]
        \end{tikzcd} \quad \quad \quad 
        (b) \quad \quad \begin{tikzcd}
        	B \\
        	{B^{\dagger \dagger}} && {A^\dagger}
        	\arrow["\iota"', from=1-1, to=2-1]
        	\arrow["{\langle g | f \rangle}", from=1-1, to=2-3]
        	\arrow["{\langle f | g \rangle^\dagger}"', from=2-1, to=2-3]
        \end{tikzcd}\]
        
        \item Given $hg = 1$, $\langle f | g \rangle \langle h | k \rangle^\iota = fk$
        
        That is, given $f: A \to X$ and $g: B \to X$, and parallel maps $h,k: X \to B$, and that  $hg = 1_X$, the following diagram commutes: 
        \[\begin{tikzcd}
        	A & {} \\
        	{B^\dagger} && B
        	\arrow["{\langle f | g \rangle}"', from=1-1, to=2-1]
        	\arrow["fk", from=1-1, to=2-3]
        	\arrow["{\langle h | k \rangle^\iota}"', from=2-1, to=2-3]
        \end{tikzcd}\]
        
        \item Given $gh = 1$, $\langle f | g \rangle^\iota \langle h | k \rangle = (kf)^\dagger$.
        
        That is, given maps $f: A \to Z$, $g: A \to X$, and parallel maps $h,k: X \to A$, and that  $gh = 1_B$ then the following diagram commutes:
        \[\begin{tikzcd}
        	{Z^\dagger} \\
        	X && {X^\dagger}
        	\arrow["{\langle f | g \rangle ^\iota}"', from=1-1, to=2-1]
        	\arrow["{(kf)^\dagger}", from=1-1, to=2-3]
        	\arrow["{\langle h | k \rangle}"', from=2-1, to=2-3]
        \end{tikzcd}\]
    \end{enumerate} 
    Where the {\bf dual inner product} for maps $h: X \to A$ and $k: X \to B$, is the map $\langle h | k \rangle^\iota: A^\dagger \to B$ defined as follows:    
        \[ \text{\bf[dual-IP] } \quad \quad  
        \begin{tikzcd}
        	{A^\dagger} & {A^{\dagger \dagger \dagger}} \\
        	B & {B^{\dagger \dagger}}
        	\arrow["{\iota_{A^\dagger}}", from=1-1, to=1-2]
        	\arrow["{\langle h | k \rangle^\iota}"', from=1-1, to=2-1]
        	\arrow["{:=}"{description}, draw=none, from=1-1, to=2-2]
        	\arrow["{\langle k^\dagger | h^\dagger \rangle^\dagger}", from=1-2, to=2-2]
        	\arrow["{\iota_B^{-1}}", from=2-2, to=2-1]
        \end{tikzcd} \]
\end{definition}

\begin{lemma} 
The following hold in an inner product category:
\begin{enumerate}[(i)]
\item If $h,k: B \to A$ and $f,g: A \to X$ then $\langle hf|kg \rangle^\iota = f^\dagger \langle h|k \rangle^\iota g$
\item $\langle g^\dagger|f^\dagger\rangle = (\langle f|g \rangle^\iota)^\dagger$
\item $\langle f|g \rangle^\dagger = \langle g^\dagger | f^\dagger \rangle^\iota$
\end{enumerate}
\end{lemma}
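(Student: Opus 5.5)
We will prove all three identities directly from the definition \textbf{[dual-IP]}, using only \textbf{[IP.1]}, functoriality of $(\_)^\dagger$, naturality of $\iota$, and the coherence $\iota_{X^\dagger}(\iota_X)^\dagger = 1_{X^\dagger}$. No appeal to \textbf{[IP.2]}--\textbf{[IP.4]} should be needed. Throughout, types are as in Definition~\ref{Defn: App inner product}: for $h:X\to A$ and $k:X\to B$ we have $\langle h|k\rangle^\iota = \iota_{A^\dagger}\langle k^\dagger|h^\dagger\rangle^\dagger \iota_B^{-1}$.

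For (i), take $h,k:B\to A$ and $f,g:A\to X$. Unfolding gives $\langle hf|kg\rangle^\iota = \iota\,\langle g^\dagger k^\dagger | f^\dagger h^\dagger\rangle^\dagger\,\iota^{-1}$. By \textbf{[IP.1]} the inner term is $g^\dagger\langle k^\dagger|h^\dagger\rangle f^{\dagger\dagger}$. Daggering it yields $f^{\dagger\dagger\dagger}\langle k^\dagger|h^\dagger\rangle^\dagger g^{\dagger\dagger}$. Naturality of $\iota$ at the two ends gives $\iota f^{\dagger\dagger\dagger} = f^\dagger\iota$ (naturality at $f^\dagger$) and $g^{\dagger\dagger}\iota^{-1} = \iota^{-1} g$. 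Moving these out leaves $f^\dagger\,\langle h|k\rangle^\iota\, g$, which is (i).

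For (ii), let $f:X\to A$ and $g:X\to B$, so that $\langle f|g\rangle^\iota:A^\dagger\to B$. Then
\[ (\langle f|g\rangle^\iota)^\dagger = (\iota_B^{-1})^\dagger\,\langle g^\dagger|f^\dagger\rangle^{\dagger\dagger}\,(\iota_{A^\dagger})^\dagger . \]
By naturality of $\iota$, $\langle g^\dagger|f^\dagger\rangle^{\dagger\dagger} = \iota^{-1}_{B^\dagger}\langle g^\dagger|f^\dagger\rangle\,\iota_{A^{\dagger\dagger}}$. The coherence axiom gives $(\iota_B^{-1})^\dagger = \iota_{B^\dagger}$, so the first two factors cancel. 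It also gives $\iota_{A^{\dagger\dagger}}(\iota_{A^\dagger})^\dagger = 1$ (coherence at the object $A^\dagger$), so the last two factors cancel as well. What remains is $\langle g^\dagger|f^\dagger\rangle$, which is (ii).

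For (iii), let $f:A\to X$ and $g:B\to X$, and apply (ii) to the pair $g^\dagger, f^\dagger$. This gives $(\langle g^\dagger|f^\dagger\rangle^\iota)^\dagger = \langle f^{\dagger\dagger}|g^{\dagger\dagger}\rangle$. We would rather expand directly instead: $\langle g^\dagger|f^\dagger\rangle^\iota = \iota_{A^\dagger}\langle f^{\dagger\dagger}|g^{\dagger\dagger}\rangle^\dagger\iota_B^{-1}$. By \textbf{[IP.1]}, after writing $f^{\dagger\dagger}=\iota^{-1}f\iota$ and $g^{\dagger\dagger}=\iota^{-1}g\iota$, we get
\[ \langle f^{\dagger\dagger}|g^{\dagger\dagger}\rangle = \iota_A^{-1}\langle f\iota_X|g\iota_X\rangle(\iota_B^{-1})^\dagger . \]
The obstacle here is that $\langle f\iota|g\iota\rangle$ cannot be simplified by \textbf{[IP.1]} alone, since $\iota$ sits on the right. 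We first try to avoid this by instead daggering (ii) applied to $f^\dagger, g^\dagger$ and cancelling $\iota$'s via naturality. Concretely, (iii) is equivalent to $\langle f|g\rangle^{\dagger} = \iota\langle f^{\dagger\dagger}|g^{\dagger\dagger}\rangle^\dagger\iota^{-1}$, that is, to $\langle f|g\rangle$ and $\iota^{-1}\langle f^{\dagger\dagger}|g^{\dagger\dagger}\rangle\iota$ agreeing up to $\iota$-conjugation, and this is exactly \textbf{[IP.2](c)}. So the plan is to invoke Lemma~\ref{Lem:double-dagger-inner-product}, which holds since we are in an inner product category where (a) and (b) hold, to obtain \textbf{[IP.2](c)}, substitute it, and finish with naturality and coherence of $\iota$ as in (ii). Checking that this step is legitimate, and that the $\iota$ indices line up, is the main point of care.
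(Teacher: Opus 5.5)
Your (i) is exactly the paper's computation. Your (ii) is correct but slightly different. You cancel the outer $\iota$'s with the coherence identity $\iota_{X^\dagger}(\iota_X)^\dagger=1$. The paper instead uses only naturality, $\iota_{A^\dagger}\langle g^\dagger|f^\dagger\rangle^\dagger=\langle f|g\rangle^\iota\iota_B=\iota_{A^\dagger}(\langle f|g\rangle^\iota)^{\dagger\dagger}$, followed by faithfulness of $(\_)^\dagger$, so its version does not need coherence.

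For (iii) your route is genuinely different and longer. You correctly observe that, given coherence and faithfulness, (iii) is equivalent to \textbf{[IP.2](c)}, i.e.\ to $\langle f\iota_X|g\iota_X\rangle=\langle f|g\rangle$. That is why \textbf{[IP.1]} alone cannot yield it, so your opening claim that \textbf{[IP.2]}--\textbf{[IP.4]} are never needed is false for (iii). Obtaining (c) from the $(\Leftarrow)$ direction of Lemma~\ref{Lem:double-dagger-inner-product} is legitimate, since that lemma's proof does not use the present one. Your unfinished step does close. The correct unfolding is $\langle g^\dagger|f^\dagger\rangle^\iota=\iota_{B^{\dagger\dagger}}\langle f^{\dagger\dagger}|g^{\dagger\dagger}\rangle^\dagger\iota_{A^\dagger}^{-1}$, not $\iota_{A^\dagger}\cdots\iota_B^{-1}$ as you wrote. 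Substituting (c) then gives $\iota_{B^{\dagger\dagger}}(\iota_{B^\dagger})^\dagger\langle f|g\rangle^\dagger(\iota_A^{-1})^\dagger\iota_{A^\dagger}^{-1}=\langle f|g\rangle^\dagger$ by coherence at $B^\dagger$ and at $A$.

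The paper avoids this detour. \textbf{[IP.2](a)} for the pair $(g,f)$ and \textbf{[IP.2](b)} for $(f,g)$ give $\iota_B\langle g^\dagger|f^\dagger\rangle^\iota=\langle g|f\rangle=\iota_B\langle f|g\rangle^\dagger$. Cancelling the isomorphism $\iota_B$ is (iii), with no coherence needed. The proof of (c) in Lemma~\ref{Lem:double-dagger-inner-product} is itself built from (a) and (b), so your argument uses the same axioms but sends them through (c) and back. What your route adds is the explicit identification of (iii) with the statement that precomposing with $\iota_X$ preserves the inner product.
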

\begin{proof} ~
    \begin{enumerate}[{\em (i)}]
        \item The definition of the dual inner product has to be carefully unwrapped while applying {\bf [IP.1]}:
        \begin{eqnarray*} \langle hf|kg \rangle^\iota
         & := & \iota \langle (kg)^\dagger| (hf)^\dagger  \rangle^\dagger \iota^{-1} 
        = \iota \langle g^\dagger k^\dagger| f^\dagger h^\dagger \rangle^\dagger \iota^{-1} \\
         & = & \iota (g^\dagger\langle  k^\dagger| h^\dagger \rangle f^{\dagger\dagger})^\dagger \iota^{-1} 
         =   \iota f^{\dagger\dagger\dagger} \langle  k^\dagger| h^\dagger \rangle^\dagger g^{\dagger\dagger}  \iota^{-1} \\
        & = & f^{\dagger} \iota \langle  k^\dagger| h^\dagger \rangle^\dagger  \iota^{-1} g  ~~=: f^{\dagger} \langle  h| k \rangle^\iota g
        \end{eqnarray*}
        \item We now show this identity holds using only the fact that the category is involutive:
    \[ \infer={\langle g^\dagger | f^\dagger \rangle = (\langle f|g \rangle^\iota)^\dagger 
         }{\infer={\langle g^\dagger | f^\dagger \rangle^\dagger =(\langle f|g \rangle^\iota)^{\dagger\dagger}
         }{\iota \langle g^\dagger | f^\dagger \rangle^\dagger 
         = \iota \langle g^\dagger | f^\dagger \rangle^\dagger \iota^{-1} \iota
         =: \langle f | g \rangle^\iota \iota = \iota ( \langle f|g\rangle ^\iota \rangle ^{\dagger\dagger}}} \]
         In particular, we use that $\iota$ is invertible and $(\_)^\dagger$ is a faithful functor.
        \item The two identities of {\bf [IP.2]} imply this identity 
        \[ \infer={\langle f^\dagger | g^\dagger \rangle^\iota = \langle g|f \rangle^\dagger}{\iota \langle f^\dagger | g^\dagger \rangle^\iota = \langle f|g \rangle
        = \iota \langle g|f \rangle^\dagger}\]
    \end{enumerate}
\end{proof}

\begin{lemma}
In an inner product category, the following are equivalent: 
\begin{enumerate}[(a)]
    \item Either {\bf [IP.2](a)} or {\bf [IP.2](b)} holds, and {\bf [IP.2](c)} $\langle f^{\dagger \dagger} | g^{\dagger \dagger} \rangle = \iota_A^{-1} \langle f | g \rangle \iota_{B^\dagger}$ holds;
    \item Both {\bf [IP.2](a)} and {\bf [IP.2](b)} hold.
\end{enumerate}
\end{lemma}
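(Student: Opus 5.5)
The idea is to unfold the dual inner product in {\bf [IP.2](a)} and show that, modulo the coherence of $\iota$, {\bf [IP.2](a)} says the same thing as {\bf [IP.2](b)}, provided the ``double-dagger'' identity {\bf [IP.2](c)} holds. The computation is reversible, so the same manipulation will also derive {\bf [IP.2](c)} from the conjunction of (a) and (b). Throughout I read {\bf [IP.2](b)} in the form given by its diagram, $\iota_B\langle f|g\rangle^\dagger = \langle g|f\rangle$, for $f:A\to X$ and $g:B\to X$.

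\emph{Step 1: auxiliary identities.} From the coherence axiom $\iota_{X^\dagger}(\iota_X)^\dagger = 1$ we get
$(\iota_X)^\dagger = \iota_{X^\dagger}^{-1}$. Applying this at $X^\dagger$ gives $(\iota_{X^\dagger})^\dagger = \iota_{X^{\dagger\dagger}}^{-1}$. Naturality of $\iota$ at the map $\iota_X$, together with invertibility, gives $\iota_{X^{\dagger\dagger}} = \iota_X^{\dagger\dagger}$. We also use that $(\_)^\dagger$ is faithful, as noted after Remark 2.2, since it is part of an adjoint equivalence.

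\emph{Step 2: unfold (a).} Take $h=f^\dagger$ and $k=g^\dagger$ in {\bf [dual-IP]}. This gives
\[ \langle f^\dagger|g^\dagger\rangle^\iota = \iota_{A^{\dagger\dagger}}\,\langle g^{\dagger\dagger}|f^{\dagger\dagger}\rangle^\dagger\,\iota_{B^\dagger}^{-1}, \]
so {\bf [IP.2](a)} reads
\[ \langle f|g\rangle = \iota_A\,\iota_{A^{\dagger\dagger}}\,\langle g^{\dagger\dagger}|f^{\dagger\dagger}\rangle^\dagger\,\iota_{B^\dagger}^{-1}. \]

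\emph{Step 3: (c) makes (a) and (b) equivalent.} Assume {\bf [IP.2](c)}, with $f$ and $g$ interchanged, and apply $\dagger$ to it:
\[ \langle g^{\dagger\dagger}|f^{\dagger\dagger}\rangle^\dagger = (\iota_{A^\dagger})^\dagger\,\langle g|f\rangle^\dagger\,(\iota_B^{-1})^\dagger. \]
Substitute this into the formula of Step 2. By Step 1, $\iota_{A^{\dagger\dagger}}(\iota_{A^\dagger})^\dagger = 1$ and $(\iota_B^{-1})^\dagger\iota_{B^\dagger}^{-1} = (\iota_{B^\dagger}\iota_B^\dagger)^{-1} = 1$. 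Hence, in the presence of (c), {\bf [IP.2](a)} becomes exactly $\langle f|g\rangle = \iota_A\langle g|f\rangle^\dagger$. This is {\bf [IP.2](b)} with the roles of $f$ and $g$ swapped, so (a) and (b) are equivalent. This proves (a)$\Rightarrow$(b) of the lemma.

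\emph{Step 4: (a) and (b) together give (c).} Equate the two expressions for $\langle f|g\rangle$, one from Step 2 and one from (b), and cancel the isomorphism $\iota_A$. Then move the remaining $\iota$'s across using the identities of Step 1. This yields
\[ \langle g^{\dagger\dagger}|f^{\dagger\dagger}\rangle^\dagger = \bigl(\iota_B^{-1}\langle g|f\rangle\iota_{A^\dagger}\bigr)^\dagger. \]
Faithfulness of $\dagger$ then gives {\bf [IP.2](c)}, after renaming $f$ and $g$. Together with Step 3, this proves (b)$\Rightarrow$(a).

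The main obstacle is purely bookkeeping. One has to get the indices of the various $\iota$'s right, in particular when to use $(\iota_{A^\dagger})^\dagger = \iota_{A^{\dagger\dagger}}^{-1}$ rather than naturality. One also has to keep track of the $f\leftrightarrow g$ swap between the two halves of {\bf [IP.2]}. I would first verify Step 3 carefully, since Step 4 is its reverse.
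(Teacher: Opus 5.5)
Your proof is correct and takes essentially the paper's route: unfold {\bf [dual-IP]} inside {\bf [IP.2](a)}, substitute the daggered form of {\bf [IP.2](c)}, and cancel the $\iota$'s using $\iota_{X^\dagger}\iota_X^\dagger = 1$. Because your Step 3 is phrased as an equivalence, it also covers the case ``{\bf [IP.2](b)} and (c) give {\bf [IP.2](a)}'', which the paper dismisses with ``similarly''. The only minor difference is in Step 4: you apply {\bf [IP.2](b)} to the pair $(g,f)$ and then strip a $\dagger$ by faithfulness, whereas the paper applies {\bf [IP.2](b)} directly to the pair $(g^{\dagger\dagger},f^{\dagger\dagger})$ and so needs neither faithfulness nor the coherence identities in that direction.
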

\begin{proof}
\begin{description}
    \item[$(\Rightarrow)$] 
    Suppose {\bf [IP.2](a)} holds, that is, $\iota_A \langle f^\dagger | g^\dagger \rangle^\iota = \langle f | g \rangle$, 
    and that the following equation holds 
    \[   \langle f^{\dagger \dagger} | g^{\dagger \dagger} \rangle = \iota_A^{-1} \langle f | g \rangle \iota_{B^\dagger}. \quad (\star) \]
    Then, we prove that {\bf [IP.2](b)} holds, that is, $\iota_B \langle f | g \rangle^\dagger = \langle g | f \rangle$:
    \begin{align*}
        \langle g | f \rangle 
             &\stackrel{\text{\bf [IP.2](a)}}{=} \iota_B \langle g^\dagger | f^\dagger \rangle^\iota 
            = \iota_B \iota_{B^{\dagger \dagger}} \langle f^{\dagger \dagger} | g^{\dagger \dagger} \rangle^\dagger \iota_{A^\dagger}^{-1} \\ 
            & \stackrel{\text{\bf [IP.2](c)}}{=} \iota_B \iota_{B^{\dagger \dagger}} \left(  \iota_A^{-1} \langle f | g \rangle \iota_{B^\dagger} \right)^\dagger \iota_{A^\dagger}^{-1} 
            \\&= \iota_B \iota_{B^{\dagger \dagger}} \iota_{B^\dagger}^\dagger \langle f | g \rangle^\dagger (\iota_A^{-1})^\dagger \iota_{A^\dagger}^{-1} \\ 
            & = \iota_B \iota_{B^{\dagger \dagger}} (\iota_{B^\dagger})^\dagger \langle f | g \rangle^\dagger \iota_{A^\dagger} \iota_{A^\dagger}^{-1} = \iota_B \langle f | g \rangle^\dagger
    \end{align*}
    Similarly, if {\bf [IP.2](b)} and $(\star)$ hold, then {\bf [IP.2](a)} holds.

    \item[$(\Leftarrow)$] 
    For the converse, assume {\bf [IP.2]}(a) and {\bf [IP.2]}(b) hold. Then, 
    \begin{align*}
        \iota_A^{-1} \langle f | g \rangle \iota_{B^\dagger} 
        & \stackrel{\text{\bf [IP.2]}(a)}{=} \iota_{A}^{-1} \iota_A \langle f^\dagger | g^\dagger \rangle^\iota \iota_{B^\dagger} 
        = \langle f^\dagger | g^\dagger \rangle^\iota \iota_{B^\dagger} \\ 
        &\stackrel{\text{\bf [dual-IP]}}{=} \iota_{A^{\dagger \dagger}} \langle g^{\dagger \dagger} | f^{\dagger \dagger} \rangle^\dagger \iota_{B^\dagger}^{-1} \iota_{B^\dagger} 
         \stackrel{\text{\bf [IP.2](b)}}{=} \iota_{A^{\dagger \dagger}} (\iota_{A^{\dagger \dagger}})^{-1} 
        \langle f^{\dagger \dagger} | g^{\dagger \dagger} \rangle \\
        & = \langle f^{\dagger \dagger} | g^{\dagger \dagger} \rangle
    \end{align*}
    \end{description}
\end{proof}

\begin{lemma} 
    Let $\mathbb{X}$ and $\mathbb{Y}$ be inner product categories. Then $(F, \gamma): \mathbb{X} \to \mathbb{Y}$, an involutive functor is inner product preserving if and only if it is a unitary functor.
\end{lemma}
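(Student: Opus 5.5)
Both directions will go through the correspondence between inner products and unitary structures from Lemma \ref{inner-to-unitary} and Proposition \ref{inner_2_unitary}. In each of $\mathbb{X}$ and $\mathbb{Y}$ the inner product is $\langle f|g\rangle = f\varphi_X g^\dagger$ with $\varphi_X = \langle 1_X|1_X\rangle$. I will write $(\_)^\ddagger$ for the involution of $\mathbb{Y}$. The preservation condition of Definition \ref{Defn: inner-product-functor} is read, for $f: A \to X$ and $g: B \to X$, as
\[ \langle F(f)|F(g)\rangle = F(\langle f|g\rangle)\,\gamma_B, \]
since $F(\langle f|g\rangle): F(A) \to F(B^\dagger)$ and $\gamma_B: F(B^\dagger) \to F(B)^\ddagger$.

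\emph{Preservation implies unitary.} Take $f = g = 1_X$. Then $\langle F(1_X)|F(1_X)\rangle = \langle 1_{F(X)}|1_{F(X)}\rangle = \varphi_{F(X)}$. The right-hand side becomes $F(\langle 1_X|1_X\rangle)\gamma_X = F(\varphi_X)\gamma_X$. This is exactly the unitary-functor triangle.

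\emph{Unitary implies preservation.} Assume $F(\varphi_X)\gamma_X = \varphi_{F(X)}$ for every $X$. I will compute
\[ \langle F(f)|F(g)\rangle = F(f)\,\varphi_{F(X)}\,F(g)^\ddagger = F(f)\,F(\varphi_X)\,\gamma_X\,F(g)^\ddagger. \]
Naturality of the preservator $\gamma$, viewed as a transformation between the contravariant functors $F((\_)^\dagger)$ and $F(\_)^\ddagger$, gives $\gamma_X F(g)^\ddagger = F(g^\dagger)\gamma_B$ for $g: B \to X$. Substituting, the expression becomes $F(f\varphi_X g^\dagger)\gamma_B = F(\langle f|g\rangle)\gamma_B$, which is the preservation condition.

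The only point that needs care is getting the orientation of the naturality square for $\gamma$ right, since it involves the contravariant functors. Equivalently, one could first use {\bf [IP.1]} to reduce preservation to the identity case, $\langle F(f)|F(g)\rangle = F(f)\langle 1|1\rangle F(g)^\ddagger$, and then apply naturality once. Nothing beyond the coherence already built into involutive functors should be needed: neither the coherence square with $\iota$ nor the axioms {\bf [IP.2]}--{\bf [IP.4]} enter the argument.
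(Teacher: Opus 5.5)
Your proposal is correct and follows essentially the same route as the paper. For the converse you set $f=g=1_X$; for the forward direction you expand $\langle f|g\rangle = f\langle 1_X|1_X\rangle g^\dagger$ via \textbf{[IP.1]}, move $\gamma$ past $F(g^\dagger)$ by naturality, and apply the unitary triangle. Your use of $\gamma_B$ for non-parallel $f: A \to X$, $g: B \to X$ is a harmless generalization of the paper's statement, which is for parallel maps only.
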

\begin{proof}
\begin{description}
    \item[{\bf ($\Rightarrow$)}] 
        Let $(F, \gamma): \mathbb{X} \to \mathbb{Y}$ be a unitary functor, and $f, g: A \to X$ a pair of parallel maps in $\mathbb{X}$, then:
        \begin{eqnarray*}
             F (\langle f|g \rangle) \gamma_A &= &F( f \langle 1_X| 1_X \rangle g^\dagger) \gamma_A \quad \quad \quad \quad \quad \quad  \text{(by {\bf [IP.1]})} \\
             &= &F(f) F(\langle 1_X |1_X\rangle) F(g^\dagger) \gamma_A\\
             &=&F(f) F(\langle 1_X |1_X\rangle) \gamma_X (F(g))^\ddagger\\
             &=& F(f) F(\varphi_X) \gamma_X  (F(g))^\ddagger\\
             &=& F (f) \langle 1_{F(X)}| 1_{F(X)} \rangle (F(g))^\ddagger ~~~~~~~ \text{(as F is unitary)}\\
             &=& \langle F(f)| F(g) \rangle
        \end{eqnarray*}
        Thus, $F$ preserves the inner product.
    \item[{\bf($\Leftarrow$)}] 
        Suppose $F$ preserves inner product, then: \begin{eqnarray*}
        F(\varphi_A) \gamma_A = F(\langle 1_A| 1_A \rangle) \gamma_A = \langle F(1_A) |F(1_A) \rangle = \langle 1_{F(A)}| 1_{F(A)} \rangle = \varphi_{F(A)}
\end{eqnarray*}
\end{description}
\end{proof}

\subsection{Isometries and special maps}
\label{Sec: App special maps}

\begin{lemma} 
There are the following containment relations:
\begin{enumerate} [(i)]
        \item Every Hermitian map is normal.
        \item Every unitary endomorphism is normal.
    \end{enumerate}
\end{lemma}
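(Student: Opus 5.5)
The plan is to reduce the normality condition to the familiar statement $ff^* = f^*f$, using the adjoint $f^* = \varphi_X f^\dagger \varphi_X^{-1}$ from Lemma \ref{Lem:Adjoint-dagger}. Both containments then follow at once from Lemma \ref{Hermitian} and Lemma \ref{Lemma7.3}.

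First I unfold both inner products using the unitary form $\langle f|g\rangle = f\varphi_X g^\dagger$ (Lemma \ref{inner-to-unitary} together with Proposition \ref{inner_2_unitary}). For $f: X \to X$ this gives $\langle f|f\rangle = f\varphi_X f^\dagger$ and $\langle f^\dagger|f^\dagger\rangle = f^\dagger \varphi_{X^\dagger} f^{\dagger\dagger}$. So $f$ is normal exactly when
\[ f\varphi_X f^\dagger \varphi_{X^\dagger} = \varphi_X f^\dagger \varphi_{X^\dagger} f^{\dagger\dagger}. \]
Next I rewrite $\varphi_X f^\dagger = f^*\varphi_X$, which is just the definition of $f^*$. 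Then I apply {\bf [U.2]} in the form $\varphi_X\varphi_{X^\dagger} = \iota_X$. This turns the left side into $ff^*\iota_X$ and the right side into $f^*\iota_X f^{\dagger\dagger}$.

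By naturality of $\iota$ we have $\iota_X f^{\dagger\dagger} = f\iota_X$, so the right side becomes $f^*f\iota_X$. Since $\iota_X$ is invertible, normality is therefore equivalent to $ff^* = f^*f$.

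With this reduction in hand, the two claims are immediate:
\begin{enumerate}[(i)]
\item If $a$ is Hermitian, then $a = a^*$ by Lemma \ref{Hermitian}(iii), so $aa^* = a^2 = a^*a$.
\item If $u$ is a unitary endomorphism, then $u^* = u^{-1}$ by Lemma \ref{Lemma7.3}(iv), so $uu^* = 1_X = u^*u$.
\end{enumerate}

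The only step that needs care is the middle rewriting. There I have to track the types of $\varphi_{X^\dagger}: X^\dagger \to X^{\dagger\dagger}$ and use naturality of $\iota$ in the correct direction, with the paper's diagrammatic composition order. Everything else is direct substitution of results already stated.
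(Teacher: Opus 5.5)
Your proof is correct, but it takes a different route from the paper's. The paper never uses the adjoint here. It checks the normality equation directly in each case.
\begin{itemize}
\item For Hermitian $a$, it uses $a\varphi_X = \varphi_X a^\dagger$ (Lemma~\ref{Hermitian}(ii)), together with \textbf{[U.2]} and naturality of $\iota$, to show that both sides equal $aa\iota$.
\item For unitary $a$, it uses the isometry equation $a\varphi_X a^\dagger = \varphi_X$ (Lemma~\ref{isometry}(ii)) on the left. On the right it needs the extra fact, from Lemma~\ref{Lemma 7.5}(i), that $a^\dagger$ is again unitary, so that $a^\dagger\varphi_{X^\dagger}a^{\dagger\dagger} = \varphi_{X^\dagger}$. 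Both sides then reduce to $\iota$.
\end{itemize}

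You instead show once, for an arbitrary endomorphism $f$, that normality is equivalent to $ff^* = f^*f$. Both containments then become the standard one-line facts about Hermitian and unitary maps in a dagger category. Your type-checking of $\langle f^\dagger|f^\dagger\rangle = f^\dagger\varphi_{X^\dagger}f^{\dagger\dagger}$ is right, and so is the direction of naturality $\iota_X f^{\dagger\dagger} = f\iota_X$. The unfolding $\langle f|g\rangle = f\varphi_X g^\dagger$ also follows immediately from \textbf{[IP.1]} applied to $\langle f1_X|g1_X\rangle$.

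What your route buys:
\begin{itemize}
\item It gives an equivalence rather than two separate containments.
\item It shows that the paper's definition of normal agrees with ordinary normality in the induced dagger category $(\mathbb{X},(-)^*)$ of Proposition~\ref{induced dagger category}. This is the analogue, for normal maps, of the paper's remark about positive maps.
\item It avoids Lemma~\ref{Lemma 7.5}(i) entirely, because naturality of $\iota$ moves $f^{\dagger\dagger}$ past $\iota$ directly.
\end{itemize}
The paper's case-by-case computation, in exchange, does not depend on the adjoint machinery and exhibits the common value of the two sides explicitly.
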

\begin{proof}~
\begin{enumerate}[{\em (i)}]

\item For $a: X \to X$ which is Hermitian, we have,  $a \varphi_X= \varphi_X a^\dagger$, so
\[
    \langle a|a\rangle \varphi_{X^\dagger} = a \varphi_X a^\dagger \varphi_{X^\dagger}= aa \varphi_X \varphi_{X^\dagger}= aa \iota
\]
and 
\[
    \varphi_X \langle \varphi_X |a^\dagger\rangle = \varphi_X a^\dagger  \varphi_{X^\dagger} a^{\dagger \dagger}= a \varphi_X \varphi_{X^\dagger} a^{\dagger \dagger}= a\iota a^{\dagger \dagger} = aa \iota.
\]  
\item Let $a: X \to X$ be a unitary map, then first note by Lemma \ref{Lemma 7.5} (i), $a^\dagger: X^\dagger \to X^\dagger$ is also unitary. Then we have,
\[ \langle a|a\rangle \varphi_{X^\dagger}= a \varphi_X a^\dagger \varphi_{X^\dagger} = \varphi_{X} \varphi_{X^\dagger} = \iota \]
and
\[ \varphi_X \langle a^\dagger |a^\dagger\rangle = \varphi_X a^\dagger  \varphi_{X^\dagger} a^{\dagger \dagger} = \varphi_{X} \varphi_{X^\dagger} = \iota. \]
\end{enumerate}
\end{proof}

\end{appendix}

\end{document}